\documentclass{article}
\usepackage[utf8]{inputenc}
\usepackage{amsmath,amssymb,amsthm,eucal,mathrsfs}
\usepackage[final]{hyperref}
\usepackage{todonotes}

\emergencystretch=1em

\newtheorem{proposition}{Proposition}
\newtheorem{theorem}{Theorem}

\newtheorem{corollary*}{Corollary}

\theoremstyle{remark}
\newtheorem*{remark*}{Remark}
\newtheorem*{remark-notat*}{A remark on notation}

\newcommand \Conf {{\mathrm {Conf}}}
\newcommand  \fin {{\mathrm{fin}}}
\newcommand  \Mfin {\mathfrak{M}_\fin}
\newcommand\scrI{{\mathscr I}}

\newcommand  \loc {{\mathrm{loc}}}
\DeclareMathOperator{\re}{Re}
\DeclareMathOperator{\im}{Im}
\DeclareMathOperator{\sgn}{sgn}

\newcommand\blfootnote[1]{%
	\begin{NoHyper}
		\renewcommand\thefootnote{}\footnote{#1}%
		\addtocounter{footnote}{-1}%
	\end{NoHyper}
}

\begin{document}

\title{A Palm hierarchy for determinantal point processes with the confluent hypergeometric kernel, the decomposing measures in the problem of harmonic analysis on the infinite-dimensional unitary group}

\author{Alexander I. Bufetov\footnote{
		Department of mathematics and computer science of St. Petersburg State University,\\
		Steklov Mathematical Institute of the Russian Academy of Sciences, Moscow, Russia,\\
		Institute for Information Transmission Problems of the Russian Academy of Sciences, Moscow, Russia,\\
		Centre national de la recherche scientifique, France.}
	}
     
\date{}

\maketitle

\begin{abstract}
  The main result of this note is that the shift of the parameter by~1 in the parameter space of decomposing measures in the problem of harmonic analysis  on the infinite-dimensional unitary group corresponds to the taking of the reduced Palm measure at infinity for our decomposing measures. The proof proceeds by finite-dimensional approximation of our measures by orthogonal polynomial ensembles. The key remark is that the taking the reduced Palm measure commutes with the scaling limit transition from finite to infinite particle systems.\blfootnote{\textit{Keywords:} determinantal point processes, Palm measure, infinite-dimensional unitary group, orthogonal polynomial ensemble, integrable kernel, Neretin theorem, Hua---Pickrell measure, confluent hypergeometric kernel, Borodin---Olshanski conjecture\\[2pt]
  \textit{MSC:} Primary 60G55; Secondary 05E10\\[2pt]
  The author was supported by a grant of the Government of the Russian Federation for the state support of scientific research, carried out under the supervision of leading scientists, agreement 075-15-2021-602.
  }
\end{abstract}
\section{Introduction}

In the problem of harmonic analysis on infinite-dimensional groups, such as, for instance, the infinite-dimensional unitary group, the decomposing measure is in many cases a determinantal point process on a subset of the real line. Recall that $U(\infty)$-invariant ergodic measures on the space of infinite Hermitian matrices are naturally parametrized by countable subsets of $\mathbb{R}$ with no accumulation points except~$0$. An ergodic decomposition measure is naturally identified with the measure on the space of infinite configurations on~$\mathbb{R}\setminus\{0\}$. The analogue of the Haar measure in the infinite-dimensional case is the family of Hua---Pickrell measures. The latter are parametrized by one complex parameter~$s$. We thus arrive at a $1$-parameter family of decomposing measures on the space of configurations on~$\mathbb{R}\setminus\{0\}$.

The aim of this paper is to establish the connection between the decomposing measures corresponding to parameters~$s$ and $s+1$. We will see that the transition $s\to s+1$ has a clear probabilistic interpretation: the decomposing measure corresponding to the parameter $s+1$ is the Palm measure at $\infty$ of the decomposing measure corresponding to the parameter $s$.

The proof is based on a simple idea. The Borodin---Olshanski theorem identifies the decomposing measure corresponding to the parameter $s$ with the determinantal point process governed by the confluent hypergeometric kernel. The latter determinantal measure is the scaling limit of the orthogonal polynomial ensembles on the circle with the Jacobi weight
\begin{equation*}
	w^{(s)}(e^{i\theta})=(1-e^{i\theta})^s(1-e^{-i\theta})^{\bar s}.
\end{equation*}
For such orthogonal polynomial ensembles the shift $s \to s+1$ corresponds to the taking of the reduced Palm measure at the point~1. The key step of the proof is to show that the passage to the scaling limit commutes with the taking of Palm measures. From the technical point of view, the main r\^ole is played by Proposition~\ref{prop:norm-form}, a generalization of the well-known fact that zeros of orthogonal polynomials interlace.

This paper is a sequel to~\cite{Buf-TrMIAN}, in which the determinantal processes with the Bessel kernel are considered, corresponding to the decomposing measures of the Pickrell measures, that is, infinite-dimensional counterparts of the canonical invariant measures on the Grassmannians, in other words, measures on the space of infinite complex-valued matrices that are invariant with respect to the left and right multiplication by elements of the infinite unitary group. In the case of the Grassmannians the decomposing measure is obtained as a scaling limit of the Jacobi orthogonal polynomial ensembles on the line. The shift of the parameter $s\to s+1$ for the finite-dimensional approximations corresponds to the taking of the Palm measure at~$1$. For the orthogonal polynomial ensemble with the weight $w$ taking of the Palm measure at~$1$ corresponds to the transition from the weight $w(x)$ to the weight $(1-x)^2w(x)$.
The correlation kernels of our determinantal point process are then transformed by a rule that is preserved by the limit transition, and the desired recursion in the infinite-dimensional case follows.

In the discrete case a similar recursion is obtained in \cite{BufOlsh-Studia} for the family of determinantal point processes with the Gamma kernel. The main purpose of this paper is to give a general framework for obtaining the  recursion $s\to s+1$.

\paragraph*{Acknowledgments} I am deeply grateful to D.~Homza, A.~Klimenko, G.~Olshanski, and I. Pylayev for fruitful discussions and to an anonymous referee for useful remarks and suggestions.

\section{Statement of the main result}

\subsection{Hua---Pickrell measures}

We proceed to precise formulations. Let $U(n)$ be the group of $n\times n$ unitary matrices. The infinite-dimensional unitary group $U(\infty)$ is defined as the inductive limit
\begin{equation*}
	U(\infty)=\bigcup_{n=1}^\infty U(n)
\end{equation*}
with respect to the natural inclusions. Let $H(n)$ be the space of Hermitian $n\times n$ matrices. The space of infinite Hermitian matrices is denoted by~$H$. The operation of ``cutting out a corner'', that is, the removal of the last row and the last column of the matrix, yields a natural projection $H(n+1)\to H(n)$. The space $H$ is the projective limit of the spaces $H(n)$ with respect to this natural system of projections. The infinite-dimensional unitary group~$U(\infty)$ acts on $H$ by conjugations.

Let $s\in \mathbb{C}$, $\re s>-1/2$. The probability measure $\mu_n^{(s)}$ on $H(n)$ is defined by the formula
\begin{equation*}
	\mu_n^{(s)}=c_{n,s}\det(1+ih)^{-n-s}\cdot\det(1+ih)^{-n-\bar s}\,dh,
\end{equation*}
where $dh$ is the Lebesgue measure on $H(n)$ and $c_{n, s}$ is the normalizing constant. A breakthrough result of Hua Loo Keng and Neretin \cite{Hua,Neretin}, established by the former for the real values $s$ and by the latter in the general case of complex~$s$, states that the measures $\mu_n^{(s)}$ form a projective system. In other words, ``cutting the corner'' takes $\mu_{n+1}^{(s)}$ to $\mu_{n}^{(s)}$. Consequently, the projective limit
\begin{equation*}
	\mu^{(s)}=\varprojlim_{n\to\infty}\mu_n^{(s)}	
\end{equation*}
is well-defined. Borodin and Olshanski have given an explicit description of the ergodic decomposition of the measure $\mu^{(s)}$ and proved that the resulting decomposing measure is a determinantal point process $\mathbb{P}^{(s)}$ on the space $\Conf(\mathbb R)$ of point configurations on $\mathbb{R}$.

The main result of this paper identifies $\mathbb{P}^{(s+1)}$ with the reduced Palm measure at~$\infty$ of the measure $\mathbb{P}^{(s)}$.
Before presenting the precise statement we recall the definitions of determinantal point processes and Palm measures.

\subsection{Spaces of configurations}
\label{subsec-space-conf}

Let $E$ be a locally compact complete metric space. A configuration $X$ on $E$ is a subset of~$E$ without accumulation points. The points of $X$ are called particles. In other words, any compact subset of~$E$ contains only finitely many points of our configuration. To a configuration $X$ we assign the Radon measure
$$
\sum\limits_{x \in X} \delta_x,
$$
where the sum is taken over all particles of the configuration~$X$. Conversely, any purely atomic integer-valued Radon measure on $E$ is identified with a  configuration. Consequently, the space $\Conf(E)$ of configurations in $E$ is a closed subset of the space of all integer-valued Radon measures on $E$. This inclusion endows $\Conf(E)$ with a structure of a complete metric space, which is not locally compact.

The Borel structure on the space $\Conf(E)$ is defined as follows. For any relatively compact Borel set $B\subset E$ consider the function
$$
\#_{B}\colon \Conf(E) \to \mathbb{R}
$$
that to a configuration assigns the number of its particles that lie in $B$. The family of functions $\#_B$, where $B$ is taken over all relatively compact subsets of $E$, generates the Borel structure on $\Conf(E)$. In particular, joint distributions of the random variables $\#_{B_1}, \ldots, \#_{B_k}$ for all possible finite collections of pairwise disjoint relatively compact Borel subsets $B_1, \dots, B_k \subset E$ determine a probability measure on $\Conf(E)$ uniquely.

\subsection{Weak topology on space of probability measures on the space of configurations}

As we saw above, the space of configurations $\Conf(E)$ is naturally endowed with the structure of a complete metric space. The space $\Mfin(\Conf(E))$ of finite Borel measures on the space of configurations is therefore a complete metric space in the weak topology.

Let $\varphi\colon E\to\mathbb{R}$ be a bounded Borel function with a compact support. Define the measurable function $\#_{\varphi}\colon\Conf(E)\to\mathbb{R}$ by the formula
$$\#_{\varphi}(X)=\sum\limits_{x\in X}\varphi(x).$$
For a relatively compact Borel subset $B\subset E$ we have 
$\#_B=\#_{\chi_B}$.

Recall that the Borel $\sigma$-algebra on $\Conf(E)$ coincides with $\sigma$-algebra generated by the integer-valued random variables $\#_B$, where $B\subset E$ is taken over all relatively compact Borel subsets. Our Borel $\sigma$-algebra therefore coincides with the $\sigma$-algebra generated by all random variables $\#_{\varphi}$, where $\varphi\colon E\to\mathbb{R}$ is taken over all continuous functions with compact support. We arrive at the following
\begin{proposition}\label{prop:fin-distr-unique}
  	A Borel probability measure $\mathbb{P}\in \Mfin(\Conf(E))$ is defined by joint distributions $(\#_{\varphi_1}, \dots, \#_{\varphi_l})$ of all 
	finite collections of continuous functions $\varphi_1, \dots, \varphi_l\colon E\to\mathbb{R}$ with pairwise disjoint support.
\end{proposition}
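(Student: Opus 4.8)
The plan is to deduce the statement from two facts that are already available: that the Borel $\sigma$-algebra of $\Conf(E)$ is generated by the random variables $\#_\varphi$ with $\varphi$ continuous of compact support, and the elementary measure-theoretic principle that a probability measure on a space whose $\sigma$-algebra is generated by a family of real random variables is uniquely determined by the joint laws of all finite subfamilies. Concretely, I would consider the collection of cylinder events
\[
\{X\in\Conf(E):\ (\#_{\varphi_1}(X),\dots,\#_{\varphi_l}(X))\in A\},\qquad A\subset\mathbb R^l \text{ Borel},
\]
with $\varphi_1,\dots,\varphi_l$ continuous of compact support. This collection is a $\pi$-system, since the intersection of two such events is again of this form, based on the union of the two families of functions; and by the generation statement recalled just above it generates the Borel $\sigma$-algebra of $\Conf(E)$. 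Hence, by Dynkin's $\pi$-$\lambda$ theorem, any two measures in $\Mfin(\Conf(E))$ that have the same joint law for every finite tuple $(\#_{\varphi_1},\dots,\#_{\varphi_l})$ must coincide.

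The one point requiring care is that the proposition only supplies the joint laws of tuples with \emph{pairwise disjoint} supports, whereas the $\pi$-system above involves arbitrary finite families. I would close this gap using the linearity of the map $\varphi\mapsto\#_\varphi$: for arbitrary continuous compactly supported $\varphi_1,\dots,\varphi_l$ and arbitrary $t_1,\dots,t_l\in\mathbb R$, the function $\psi=\sum_{j}t_j\varphi_j$ is again continuous with compact support and $\sum_j t_j\#_{\varphi_j}=\#_\psi$, so that
\[
\mathbb E\Big[\exp\Big(i\sum_{j}t_j\#_{\varphi_j}\Big)\Big]=\mathbb E\big[\exp(i\,\#_\psi)\big].
\]
The right-hand side is a value of the characteristic function of the single variable $\#_\psi$, i.e.\ of its law. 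A single function constitutes a family with pairwise disjoint supports in the trivial sense, so this one-dimensional law is part of the data granted by the hypothesis. By uniqueness of the Fourier transform on $\mathbb R^l$, the joint laws of all finite tuples are therefore determined by the laws of the individual $\#_\psi$, and the argument of the first paragraph applies.

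An alternative route, closer to the Borel-set formulation recorded earlier (that the joint laws of $\#_{B_1},\dots,\#_{B_k}$ for pairwise disjoint relatively compact Borel $B_i$ determine the measure), would be to approximate each $\chi_{B_i}$ from below by continuous compactly supported $\varphi_n^{(i)}$ with $\varphi_n^{(i)}\uparrow\chi_{B_i}$ pointwise, whence $\#_{\varphi_n^{(i)}}\uparrow\#_{B_i}$ by monotone convergence on each locally finite configuration. Here the genuine obstacle is precisely the disjoint-support requirement: the closures of disjoint sets may meet, so the approximants need not have disjoint supports. This is repairable by first reducing to a generating class of sets at positive pairwise distance and invoking Urysohn's lemma, but the bookkeeping is exactly what the linearity argument avoids. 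For this reason the characteristic-function route of the preceding paragraphs is the one I would actually carry out; the decisive conceptual step there is the simple observation that $\varphi\mapsto\#_\varphi$ is linear, which reduces every multivariate joint law to the univariate laws the hypothesis already provides.
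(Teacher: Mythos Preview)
Your argument is correct. The paper does not give a proof of this proposition at all: it simply records, in the sentence preceding the statement, that the Borel $\sigma$-algebra of $\Conf(E)$ is generated by the random variables $\#_\varphi$ for continuous compactly supported $\varphi$, and then writes ``We arrive at the following''. In particular, the paper never addresses the point you flag, namely why it suffices to know only the joint laws for families with \emph{pairwise disjoint} supports.

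Your linearity/characteristic-function reduction is thus a genuine addition rather than a different route to the same place: it supplies exactly the step the paper leaves implicit. The observation that the law of every single $\#_\psi$ already determines the full multivariate characteristic function, because $\sum_j t_j\#_{\varphi_j}=\#_{\sum_j t_j\varphi_j}$, is the clean way to close the gap, and your remark that a one-element family has pairwise disjoint supports vacuously is what makes the hypothesis applicable. The alternative approximation route you sketch in your third paragraph would also work after the positive-distance reduction you mention, but as you say it is heavier; the linearity argument is the right choice.
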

The weak topology on $\Mfin(\Conf(E))$ can also be described using finite-dimen\-sional distributions (see~Daley---Vere-Jones \cite[Vol.~2, Theorem 11.1.VII]{DVJ}). Let $\mathbb{P}_n$, $n\in{\mathbb N}$ and $\mathbb{P}$ be Borel probability measures on the space $\Conf(E)$. Then the measures $\mathbb{P}_n$ converge weakly to $\mathbb{P}$ as $n\to \infty$ if and only if for any finite set of continuous functions $\varphi_1, \dots, \varphi_l$ with pairwise disjoint supports the joint distributions of the random variables $\#_{\varphi_1}, \dots, \#_{\varphi_l}$ with respect to the measure $\mathbb{P}_n$ converge as $n\to \infty$ to the joint distribution $\#_{\varphi_1}, \dots, \#_{\varphi_l}$  with respect to the measure $\mathbb{P}$ (the convergence of the joint distributions being understood with respect to the weak topology on the space of Borel measures on ${\mathbb R}^l$).

\subsection{Locally trace-class operators}
\label{subsec-loc-tr-cls}

Let $\mu$ be a $\sigma$-finite Borel measure on the space $E$.
The ideal of trace-class operators ${\widetilde K}\colon L_2(E,\mu)\to L_2(E,\mu)$ is denoted $\scrI_{1}(E,\mu)$ (see the detailed definition in \cite[Vol.~1]{ReedSimon}). The symbol 
$\|{\widetilde K}\|_{\scrI_1}$ denotes the $\scrI_{1}$-norm of the operator ${\widetilde K}$.
Let $\scrI_{1,\loc}(E,\mu)$ be the space of operators
\begin{equation*}
	K\colon L_2(E,\mu)\to L_2(E,\mu)
\end{equation*}
such that for any relatively compact Borel subset $B\subset E$ the operator $\chi_BK\chi_B$ is trace-class.
The space $\scrI_{1,\loc}(E,\mu)$ is metrized by a countable family of seminorms $\|\chi_BK\chi_B\|_{\scrI_1}$,
where $B$ is taken over an exhausting sequence $B_n$ of relatively compact Borel subsets.

\subsection{Determinantal point processes}

A Borel probability measure $\mathbb{P}$ on the space $\Conf(E)$ is called \emph{determinantal} if there exists an operator
$K\in\scrI_{1,\loc}(E,\mu)$ such that for any bounded measurable function $g$ such that the function $g-1$ is supported in a relatively compact subset $B$, we have 
\begin{equation}
	\label{eq1}
	\mathbb{E}_{\mathbb{P}}\Psi_g
	=\det\bigl(1+(g-1)K\chi_{B}\bigr).
\end{equation}
The Fredholm determinant in~\eqref{eq1} is well-defined, since $K\in \scrI_{1, \loc}(E,\mu)$.
The formula~\eqref{eq1} uniquely defines the measure $\mathbb{P}$.
For an arbitrary set of pairwise disjoint Borel subsets $B_1,\dotsc,B_l\subset E$
and any $z_1,\dotsc,z_l\in {\mathbb C}$ the formula~\eqref{eq1} implies
\begin{equation*}
	\mathbb{E}_{\mathbb{P}}z_1^{\#_{B_1}}\cdots z_l^{\#_{B_l}}
	=\det\biggl(1+\sum\limits_{j=1}^l(z_j-1)\chi_{B_j}K\chi_{\sqcup_i B_i}\biggr).
\end{equation*}

For further background on determinantal point process 
see e.~g.\ \cite{BorOx, HoughEtAl, Lyons,
LyonsSteif, Lytvynov, ShirTaka0, ShirTaka1, ShirTaka2, Soshnikov}.

Given an operator $K\in \scrI_{1,\loc}(E,\mu)$, the symbol $\mathbb{P}_K$ stands for the determinantal measure induced by~$K$. The measure $\mathbb{P}_K$ is uniquely defined by~$K$, but different operators may define the same measure.
A theorem by Macchi---Soshnikov and Shirai---Takahashi~\cite{Macchi, Soshnikov,ShirTaka1} states that any positive Hermitian contraction from~$\scrI_{1,\loc}(E,\mu)$ induces a determinantal point process.

A fundamental example of a determinantal measure is an orthogonal polynomial ensemble on the circle or on the line. Recall that an orthogonal polynomial ensemble of degree $n$ with weight $w$ on $\mathbb R$ is the measure given by the formula
\begin{equation}\label{eq:orth-r}
	Z^{-1}\cdot \prod_{1\le i<j\le n}(x_i-x_j)^2\cdot \prod_{i=1}^n w(x_i)\,dx_i,
\end{equation}
where $Z$ is the normalizing constant. (We assume here that the weight $w$ allows all moments up to $(n-1)$-th.) The measure~\eqref{eq:orth-r} is determinantal and induced by the projection operator onto the subspace
\begin{equation*}
	\operatorname{span}\{1,x,\dots,x^{n-1}\}\sqrt{w(x)}\subset L_2(\mathbb{R}).
\end{equation*}
The kernel of our projection is the $n$-th Christoffel---Darboux kernel of the system of orthogonal polynomials with weight $w$. 

An orthogonal polynomial ensemble on the unit circle is defined in a similar way. Namely, we consider the measure 
\begin{equation}\label{eq:circ-ensemble}
	Z^{-1}\cdot \prod_{1\le k<l\le n} \bigl|e^{i\theta_k}-e^{i\theta_l}\bigr|^2 \cdot \prod_{k=1}^n w(e^{i\theta_k})\,\frac{d\theta_k}{2\pi}.
\end{equation}
The measure~\eqref{eq:circ-ensemble} is also determinantal and induced by the projection onto the subspace of trigonometric polynomials of degree up to $n-1$ with weight $w$.

\begin{remark-notat*}
  	Slightly stretching the notation, we use the same symbol $Z$ for the normalizing constant in different orthogonal polynomial ensembles.
\end{remark-notat*}

\subsection{Palm measures of determinantal point processes}

For concreteness here we consider the case of a point process on an open subset $U\subset\mathbb{R}$.
Recall that the first correlation measure $\rho_1$ of a point process $\mathbb{P}$ is defined by the formula
\begin{equation*}
	\rho_1(B)=\mathbb{E}_{\mathbb{P}}\#_B.
\end{equation*}
Let $\mathbb{P}$ be a probability measure on $\Conf(U)$ admitting the first correlation measure. For a point $p\in U$, following Khintchine \cite{Khintchine}, we define the reduced Palm measure as the limit
\begin{equation*}
	\lim_{\varepsilon\to 0}\frac{\mathbb{P}(\,\cdot\,|\#_{(p-\varepsilon,p+\varepsilon)}\ge 1)}{\rho_1((p-\varepsilon,p+\varepsilon))},
\end{equation*}
with the particle in point $p$ being deleted. The reduced Palm measure of the point process~$\mathbb{P}$ at $p$ is roughly interpreted as a conditional measure of our process conditioned to contain a particle at the point $p$. A general formalism of the Palm measures, based on the Campbell measures, has been developed by Kallenberg \cite{Kallenberg}; for a brief presentation see \cite{Buf-AnnProb}.

The Shirai--Takahashi theorem states that the Palm measure of a determinantal point process is determinantal and we have $(\mathbb{P}_K)^p=\mathbb{P}_{K^p}$, where
\begin{equation}\label{eq:Palm-meas}
	K^p(x,y)=K(x,y)-\frac{K(x,p)K(p,y)}{K(p,p)}.
\end{equation}
If $K$ is the projection onto a subspace $H$, then $K^p$ is the projection onto the subspace $\{f\in H: f(p)=0\}$.

If the additional condition 
\begin{equation}\label{eq:fin-inf}
	\rho_1(\{x:|x|\ge 1\})<+\infty
\end{equation}
is satisfied, then the Palm measure at $\infty$ is defined as the limit of conditional measures
\begin{equation*}
	\lim_{R\to\infty}
	\frac{\mathbb{P}(\,\cdot\,|\#_{\{x:|x|\ge R\}}\ge 1)}{\rho_1(\{x:|x|\ge R\})}.
\end{equation*}
In other words, condition \eqref{eq:fin-inf} guarantees that our point process is well-defined on the phase space $U\sqcup\{\infty\}$. The Palm measure is then  taken in this enlarged phase space.

\subsection{Admissible weights}

A function $\rho$ defined on a subset of~$\mathbb{R}$ is called \emph{an admissible weight} if the following conditions are satisfied. First, the function $\rho$ is continuous and positive on an open subset $U\subset\mathbb{R}$ except, perhaps, at finitely many points, where the function $\rho$ is allowed to have jump discontinuities or assume values~$0$ or $\infty$. In the neighbourhood of a point $p$ such that $\rho(p)=\infty$ we make the requirement
\begin{equation*}
	\int_{p-\varepsilon}^{p+\varepsilon}p(t)\,dt<+\infty.
\end{equation*}
We assume that the weight $\rho$ has limits $\lim\limits_{t\to+\infty}\rho(t)$, $\lim\limits_{t\to-\infty}\rho(t)$, possibly infinite, and that
\begin{equation*}
	\int_{\substack{|t|>1,\\t\in U}}\frac{\rho(t)}{t^2}dt<+\infty.
\end{equation*}
We now consider kernels $\Pi(x, y)$ of the form
\begin{equation*}
	\Pi(x,y)=\rho(x)\rho(y)\tilde{\Pi}(x,y),
\end{equation*}
where the kernel $\tilde{\Pi}(x,y)$ is a continuous function. If $\tilde\Pi(p,p)> 0$, then the Palm kernel is written in the form
\begin{equation*}
	\Pi^p(x,y)=\rho(x)\rho(y)\biggl(\tilde{\Pi}(x,y)-\frac{\tilde{\Pi}(x,p)\tilde{\Pi}(p,y)}{\tilde{\Pi}(p,p)}\biggr).
\end{equation*}
Our assumptions imply that the Palm projections $\Pi^q$ are continuous in the locally trace class topology. In other words, for any interval $[a, b]\subset U$ the family of operators $\chi_{[a,b]}\Pi^q\chi_{[a,b]}$ is continuous in $q$ in the space of trace-class operators. In particular, the required continuity still holds in points $q$ with $\rho(q)=\infty$. The continuity of the operators $\Pi^q$ implies the continuity of the Palm measures $\mathbb{P}_{\Pi^q}$ with respect to $q\in U$.

\subsection{Change of variables in kernel of determinantal processes}

A homeomorphism $g$ of the space $E$ induces a homeomorphism on the space of configurations $\operatorname{Conf}(E)$ by the formula $g(X)=\{g(x):x\in X\}$. Slightly stretching notation, we denote the induced homeomorphism by the same symbol~$g$.
Let the symbol $\mathbb{P}\circ g$ stand for the measure defined by the formula $\mathbb{P}\circ g(A)=\mathbb{P}(g(A))$. Consider the determinantal process $\mathbb{P}_K$ defined by an integral operator $K$ acting on the space $L_2(E, \mu)$ and admitting a kernel $K(x, y)$. Assume that a homeomorphism $g$ preserves the measure class of our reference measure $\mu$. One directly checks that the measure $\mathbb{P}_K\circ g$ is again determinantal with the kernel
\begin{equation*}
	g_*K(x,y)=K(g(x),g(y))\cdot 
	\sqrt{\dfrac{d\mu\circ g}{d\mu}(x)\dfrac{d\mu\circ g}{d\mu}(y)}.
\end{equation*}

Consider the $n$-particle orthogonal polynomial circular ensemble
\begin{equation}\label{eq:circ-orth}
	Z^{-1}\cdot \prod_{1\le k<l\le n}\bigl|e^{i\theta_k}-e^{i\theta_l}\bigr|^2\cdot
	\prod_{k=1}^n \frac{w(e^{i\theta_k})}{2\pi}d\theta_k,
\end{equation}
with the weight $w$ satisfying
\begin{equation*}
	\int |w(e^{i\theta})|\,\frac{d\theta}{2\pi}<+\infty.
\end{equation*}
The Palm measure at $1$ of our orthogonal polynomial ensemble is naturally identified with the $(n-1)$-particle orthogonal polynomial ensemble
\begin{equation}\label{eq:circ-orth-palm}
	Z^{-1}\cdot \prod_{1\le k<l\le n-1} \bigl|e^{i\theta_k}-e^{i\theta_l}\bigr|^2\cdot
	\prod_{k=1}^{n-1}\bigl|1-e^{i\theta_k}\bigr|^2 \frac{w(e^{i\theta_k})}{2\pi}d\theta_k.		
\end{equation}
Indeed, the Palm measure corresponds to the projection onto the subspace
\begin{equation*}
	\operatorname{span}\{1,e^{i\theta},\dots,e^{i(n-2)\theta}\}(1-e^{i\theta})\sqrt{w(e^{i\theta})},
\end{equation*}
and the $n-1$-particle orthogonal polynomial ensemble is represented by the projection onto subspace
\begin{equation*}
	\operatorname{span}\{1,e^{i\theta},\dots,e^{i(n-2)\theta}\}|1-e^{i\theta}|\sqrt{w(e^{i\theta})}.
\end{equation*}
These subspaces differ by a multiplication by a function with the absolute value equal to $1$. Thus the respective determinantal measures coincide.

The change of variable
\begin{equation}\label{eq:ch-var-theta-x}
	e^{i\theta_k}=\frac{x_k-i}{x_k+i},
\end{equation}
yields the following proposition.

\begin{proposition}\label{prop:tildeW}
  	Let $\tilde w$ be a positive Borel function on~$\mathbb{R}$ such that
	\begin{equation*}
		\int_{\mathbb{R}}\frac{\tilde w(x)\,dx}{1+x^2}<+\infty.
	\end{equation*}
	Consider the $n$-particle orthogonal polynomial ensemble
	\begin{equation}\label{eq:tildew-ens}
		Z^{-1}\cdot \prod_{1\le j<k\le n}(x_j-x_k)^2 \cdot
		\prod_{k=1}^n \frac{\tilde w(x_k)\,dx_k}{(1+x_k^2)^n}.
	\end{equation}
	The Palm measure at $\infty$ of the orthogonal polynomial ensemble~\eqref{eq:tildew-ens} is the $(n-1)$-particle orthogonal polynomial ensemble
	\begin{equation*}
		Z^{-1}\cdot \prod_{1\le j<k\le n-1}(x_j-x_k)^2 \cdot
		\prod_{k=1}^{n-1} \frac{\tilde w(x_k)\,dx_k}{(1+x_k^2)^n}.
	\end{equation*}
\end{proposition}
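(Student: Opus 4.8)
The plan is to transport the entire statement to the unit circle via the Cayley transform~\eqref{eq:ch-var-theta-x}, so that the already-established circular Palm identity \eqref{eq:circ-orth}$\to$\eqref{eq:circ-orth-palm} does all the work. The map $x\mapsto e^{i\theta}=(x-i)/(x+i)$ is a homeomorphism of the one-point compactification $\mathbb{R}\sqcup\{\infty\}$ onto the circle, and it sends $\infty$ to the point $1$, since $(x-i)/(x+i)\to 1$ as $|x|\to\infty$. As this map preserves the measure class of the reference measure and carries configurations to configurations, the taking of the reduced Palm measure commutes with it; in particular the Palm measure at $\infty$ of a process on the line is the image, under the inverse change of variables, of the Palm measure at $1$ of the corresponding process on the circle. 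It therefore suffices to match both sides of the proposition with their circular counterparts and to invoke the circular identity.

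First I would record the transformation rules under~\eqref{eq:ch-var-theta-x}. Differentiating $e^{i\theta}=(x-i)/(x+i)$ gives $d\theta=\frac{2\,dx}{1+x^2}$, while the identity $e^{i\theta_k}-e^{i\theta_l}=\frac{2i(x_k-x_l)}{(x_k+i)(x_l+i)}$ yields
\[
\bigl|e^{i\theta_k}-e^{i\theta_l}\bigr|^2=\frac{4(x_k-x_l)^2}{(1+x_k^2)(1+x_l^2)}.
\]
Collecting these and setting $\tilde w(x)=w\bigl((x-i)/(x+i)\bigr)$, the $n$-particle circular ensemble~\eqref{eq:circ-orth} becomes, up to the normalizing constant, the line ensemble~\eqref{eq:tildew-ens}: the $n-1$ factors $(1+x_k^2)^{-1}$ coming from the Vandermonde, together with the single factor $(1+x_k^2)^{-1}$ coming from $d\theta$, produce exactly the exponent $n$ in the denominator $(1+x_k^2)^n$. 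The hypothesis $\int_{\mathbb{R}}\tilde w(x)(1+x^2)^{-1}\,dx<\infty$ is precisely the circular integrability $\int|w(e^{i\theta})|\,\tfrac{d\theta}{2\pi}<\infty$; it also guarantees that~\eqref{eq:tildew-ens} possesses the moments needed for its Vandermonde integral to converge, and that condition~\eqref{eq:fin-inf} holds, so that the Palm measure at $\infty$ is well defined.

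By the circular identity, the Palm measure at $1$ of~\eqref{eq:circ-orth} is the $(n-1)$-particle circular ensemble~\eqref{eq:circ-orth-palm}, namely the ensemble with modified weight $\bigl|1-e^{i\theta}\bigr|^2 w(e^{i\theta})$. It remains to push this back to the line, now with $n-1$ particles. The crucial computation is $1-e^{i\theta}=2i/(x+i)$, whence
\[
\bigl|1-e^{i\theta}\bigr|^2=\frac{4}{1+x^2}.
\]
I would then re-run the change of variables for $n-1$ particles and track the powers of $1+x_k^2$ in the denominator: the drop to $n-1$ particles lowers the Vandermonde contribution to $(1+x_k^2)^{-(n-2)}$; the extra factor $\bigl|1-e^{i\theta}\bigr|^2$ contributes one more $(1+x_k^2)^{-1}$; and $d\theta$ contributes the last $(1+x_k^2)^{-1}$. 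These sum to $(1+x_k^2)^{-n}$, while $\tilde w$ is left untouched, reproducing exactly the claimed $(n-1)$-particle ensemble.

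The step requiring most care is this final bookkeeping of exponents, which is the heart of the proposition: the single extra factor $\bigl|1-e^{i\theta}\bigr|^2=4/(1+x^2)$ precisely compensates for the loss of one power of $1+x^2$ in the Vandermonde when the particle number drops from $n$ to $n-1$, so that the denominator exponent remains $n$ rather than decreasing. The one conceptual point to verify is that the Palm operation genuinely commutes with the homeomorphism~\eqref{eq:ch-var-theta-x}; this follows from the intrinsic, conditioning-based definition of the reduced Palm measure together with the fact that~\eqref{eq:ch-var-theta-x} is a measure-class-preserving homeomorphism carrying $\infty$ to $1$, so that the defining limits and the first correlation measures transform covariantly.
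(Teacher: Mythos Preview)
Your proof is correct and follows exactly the same route as the paper: both transport the ensembles to the circle via the Cayley transform~\eqref{eq:ch-var-theta-x}, sending $\infty$ to $1$, and then invoke the circular Palm identity \eqref{eq:circ-orth}$\to$\eqref{eq:circ-orth-palm}. You have simply written out in full the Jacobian and exponent bookkeeping that the paper leaves implicit in its one-sentence proof.
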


\begin{proof}
  Indeed, the change of variable~\eqref{eq:ch-var-theta-x} takes the first and the second orthogonal ensembles into the $n$-particle orthogonal polynomial ensemble~\eqref{eq:circ-orth} and the $(n-1)$-particle orthogonal polynomial ensemble~\eqref{eq:circ-orth-palm} respectively, and the proposition is now clear.
\end{proof}

\subsection{Hua---Pickrell measures and determinantal point processes}

We go back to the Hua---Pickrell measures $\mu^{(s)}$ and their decomposing measures $\mathbb{P}^{(s)}$. Borodin and Olshanski proved that $\mathbb{P}^{(s)}$ is a determinantal point process on $\mathbb{R}\setminus\{0\}$ with the kernel
\begin{equation}\label{eq:conf-kernel}
	K^{(s)}(x,y)=c_s\cdot\frac{P_s(x)Q_s(y)-P_s(y)Q_s(x)}{x-y},
\end{equation}
where
\begin{equation}\label{eq:PQc}
\begin{gathered}
	P_s(x)=\biggl|\frac{2}{x}\biggr|^{\re s}\cdot e^{-\tfrac{i}{x}+\pi\tfrac{\im s\cdot\sgn(x)}{2}}\cdot
	{}_1F_1\biggl[\genfrac{}{}{0pt}{}{s}{2\re s+1}\biggm|\frac{2i}{x}\biggr],\\
	Q_s(x)=\frac{2}{x}\cdot\biggl|\frac{2}{x}\biggr|^{\re s}\cdot e^{-\tfrac{i}{x}+\pi\tfrac{\im s\cdot\sgn(x)}{2}}\cdot
	{}_1F_1\biggl[\genfrac{}{}{0pt}{}{s}{2\re s+2}\biggm|\frac{2i}{x}\biggr],\\
	c_s=\frac{1}{2\pi}\Gamma\biggl[\genfrac{}{}{0pt}{}{s+1,\bar{s}+1}{2\re s+1, \re s+2}\biggr].
\end{gathered}
\end{equation}
Set
\begin{gather*}
  	\tilde{\rho}(x)=\biggl|\frac{2}{x}\biggr|^{\re s}\cdot 
	e^{\pi\tfrac{\im s\cdot\sgn(x)}{2}},\\
	\tilde A(x)=e^{-\tfrac{i}{x}}\cdot
	{}_1F_1\biggl[\genfrac{}{}{0pt}{}{s}{2\re s+1}\biggm|\frac{2i}{x}\biggr],\quad
	\tilde B(x)=\frac{2}{x}\cdot e^{-\tfrac{i}{x}}\cdot
	{}_1F_1\biggl[\genfrac{}{}{0pt}{}{s}{2\re s+2}\biggm|\frac{2i}{x}\biggr].
\end{gather*}
The confluent hypergeometric kernel is then expressed by the formula
\begin{equation*}
	K^{(s)}(x,y)=\tilde{\rho}(x)\tilde{\rho}(y)\frac{\tilde{A}(x)\tilde{B}(y)-\tilde{A}(y)\tilde{B}(x)}{x-y}.
\end{equation*}
Denote
\begin{equation*}
	\tilde{K}(x,y)=\frac{\tilde{A}(x)\tilde{B}(y)-\tilde{A}(y)\tilde{B}(x)}{x-y}.
\end{equation*}
On the diagonal $x=y$ the values of the kernel $\tilde{K}$ are given by the formula
\begin{equation*}
	\tilde{K}(x,x)=\tilde{A}'(x)\tilde{B}(x)-\tilde{A}(x)\tilde{B}'(x).
\end{equation*}
The functions $\tilde A$, $\tilde B$, $\tilde K(x,x)$ are holomorphic on $\mathbb{C}\setminus\{0\}\cup\{\infty\}$.
Apply the change of the variable $y = 1/x$ and set
\begin{equation*}
	\Pi^{(s)}(y,y)=\frac{1}{x^2}\tilde K^{(s)}\biggl(\frac{1}{x},\frac{1}{x}\biggr).
\end{equation*}

\begin{proposition}
	For any $s\in\mathbb{C}$, $\re s>-1/2$, we have $\Pi^{(s)}(0,0)>0$.
\end{proposition}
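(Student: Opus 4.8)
The plan is to evaluate $\Pi^{(s)}(0,0)$ directly from the diagonal formula for $\tilde K^{(s)}$ together with the convergent Taylor expansion of the confluent hypergeometric function near the origin of its argument. First I would unwind the definition. The point $0$ in the variable of $\Pi^{(s)}$ is the image of the point at infinity under $x\mapsto 1/x$, and the transformation of a diagonal correlation kernel carries the Jacobian factor $1/x^2$ (this is the change-of-variable formula $g_*K(x,x)=\tfrac{1}{x^2}K(1/x,1/x)$ for $g(x)=1/x$, recorded earlier). Reading the definition of $\Pi^{(s)}$ in this light, the value at the image of the point at infinity is
\[
\Pi^{(s)}(0,0)=\lim_{x\to\infty}x^2\,\tilde K^{(s)}(x,x)=\lim_{x\to\infty}x^2\bigl(\tilde A'(x)\tilde B(x)-\tilde A(x)\tilde B'(x)\bigr).
\]
The observation that makes the computation elementary rather than delicate is that as $x\to\infty$ the argument $2i/x$ of each ${}_1F_1$ tends to $0$; we are therefore in the regime of the convergent series ${}_1F_1(a;b;z)=1+\tfrac{a}{b}z+O(z^2)$, and not in the oscillatory large-argument regime that would otherwise make the analysis at infinity subtle.

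I would then pass to the variable $w=1/x$, in which $\tilde A$ and $\tilde B$ are holomorphic near $w=0$ (this is exactly the holomorphy of $\tilde A,\tilde B$ at $\infty$ already asserted above), and rewrite the limit as a Wronskian. A short computation with $d/dx=-w^2\,d/dw$ gives $x^2\bigl(\tilde A'\tilde B-\tilde A\tilde B'\bigr)=\tilde A\,\partial_w\tilde B-\partial_w\tilde A\cdot\tilde B$, whence
\[
\Pi^{(s)}(0,0)=\bigl(\tilde A\,\partial_w\tilde B-\partial_w\tilde A\cdot\tilde B\bigr)\big|_{w=0}.
\]
Writing $\tilde A=e^{-iw}\,{}_1F_1(s;2\re s+1;2iw)$ and $\tilde B=2w\,e^{-iw}\,{}_1F_1(s;2\re s+2;2iw)$, I read off $\tilde A(0)=1$ and $\tilde B(0)=0$, while $\partial_w\tilde B(0)=2$ (the prefactor $2w$ differentiates to $2$ and the remaining factors equal $1$ at $w=0$). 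Because $\tilde B(0)=0$, the term containing $\partial_w\tilde A$ drops out entirely, so I obtain $\Pi^{(s)}(0,0)=\tilde A(0)\,\partial_w\tilde B(0)=2$, which is in particular strictly positive and, reassuringly, independent of $s$.

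The only genuine points to verify are the two structural facts I have used: that the limit $\lim_{x\to\infty}x^2\tilde K^{(s)}(x,x)$ exists and equals the value of the transformed kernel at $0$, and that $\tilde A,\tilde B$ extend holomorphically across $w=0$ so that the Wronskian at $w=0$ is meaningful. Both follow from the holomorphy of $\tilde A,\tilde B,\tilde K(x,x)$ on $(\mathbb C\setminus\{0\})\cup\{\infty\}$ stated just before the proposition, together with the entirety of ${}_1F_1$ in its argument. I do not expect a serious obstacle: the whole substance of the argument is the single observation that $\tilde B$ vanishes at the point at infinity while $\tilde A$ does not, so that the diagonal Palm density there is the nonzero product $\tilde A(0)\,\partial_w\tilde B(0)$.
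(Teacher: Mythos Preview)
Your proof is correct and is essentially the paper's own argument. The paper also passes to the variable $y=1/x$, observes that the integrable form survives the change of variable, and evaluates the diagonal Wronskian at $0$; it names the transformed numerator functions $A^{\lozenge}(y)=\tilde B(1/y)$ and $B^{\lozenge}(y)=\tilde A(1/y)$ and reads off $A^{\lozenge}(0)=0$, $B^{\lozenge}(0)=1$, $(A^{\lozenge})'(0)=2$, obtaining $\Pi^{(s)}(0,0)=2$ exactly as you do.
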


\begin{proof}
  	The change of variable $y=1/x$ preserves the integrable form of the kernel. Indeed, let
	\begin{gather*}
		\mathring{A}(y)=y\tilde{B}(x),\quad
		\mathring{B}(y)=y\tilde{A}(x),\quad
		\mathring{\rho}(y)=\frac{1}{|y|}.
	\end{gather*}
	In restriction to the diagonal $x=y$, $x,y\in\mathbb{R}$, the kernel $\Pi^{(s)}$ coincides with the restriction onto the diagonal of the kernel $K^{\lozenge}$ defined by the formula
	\begin{gather*}
		K^{\lozenge}(x,y)=\frac{A^{\lozenge}(x)B^{\lozenge}(y)-A^{\lozenge}(y)B^{\lozenge}(x)}{x-y},\\
		A^{\lozenge}(y)=\tilde{B}(x),\quad
		B^{\lozenge}(y)=\tilde{A}(x).
	\end{gather*}
	Hence
	\begin{align*}
		A^{\lozenge}(y)&{}=2ye^{-iy}\cdot
		{}_1F_1\biggl[\genfrac{}{}{0pt}{}{s}{2\re s+2}\biggm|2iy\biggr],\\
		B^{\lozenge}(y)&{}=e^{-iy}\cdot
		{}_1F_1\biggl[\genfrac{}{}{0pt}{}{s}{2\re s+1}\biggm|2iy\biggr],\quad
	\end{align*}
	One can see that $B^{\lozenge}(0)=1$, $A^{\lozenge}(0)=0$,
	$(A^{\lozenge})'(0)=2$, hence $\Pi^{(s)}(0,0)=K^{\lozenge}(0,0)=2$.
\end{proof}

The kernel $K^{(s)}$ is called the confluent hypergeometric kernel. We thus have the identity $\mathbb{P}^{(s)}=\mathbb{P}_{K^{(s)}}$.
The main result of the paper is now formulated as follows.

\begin{theorem}\label{thm:main}
  For any $s\in\mathbb{C}$, $\re s>-1/2$, the measure $\mathbb{P}_{K^{(s+1)}}$ is the Palm measure at~$\infty$ of the measure $\mathbb{P}_{K^{(s)}}$.
\end{theorem}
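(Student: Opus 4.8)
The plan is to realise $\mathbb{P}_{K^{(s)}}$ and $\mathbb{P}_{K^{(s+1)}}$ as scaling limits of finite orthogonal polynomial ensembles, to check the Palm identity at the finite level, and then to transport it through the limit by showing that the reduced Palm measure commutes with the scaling-limit transition. By the Borodin--Olshanski theorem, $\mathbb{P}_{K^{(s)}}$ is the scaling limit of the (rescaled) eigenvalue ensembles $\mathbb{P}_n^{(s)}$ of $\mu_n^{(s)}$; after the change of variable~\eqref{eq:ch-var-theta-x} these are circular Jacobi ensembles with weight $w^{(s)}$, equivalently the $n$-particle line ensembles~\eqref{eq:tildew-ens} with $\tilde w=\tilde w^{(s)}$ and the factor $(1+x^2)^{-n}$. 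Writing $K_n^{(s)}$ for the corresponding Christoffel--Darboux kernels, we have $K_n^{(s)}\to K^{(s)}$ in the locally trace-class topology, and hence $\mathbb{P}_n^{(s)}\to\mathbb{P}_{K^{(s)}}$ weakly.

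First I would dispose of the finite-dimensional Palm identity. The eigenvalue weight of $\mu_n^{(s)}$ is, up to normalisation, $(1+\lambda^2)^{-n-\re s}\exp(2\im s\arctan\lambda)$, so in the notation of~\eqref{eq:tildew-ens} one has $\tilde w^{(s)}(\lambda)=(1+\lambda^2)^{-\re s}\exp(2\im s\arctan\lambda)$. By Proposition~\ref{prop:tildeW} the reduced Palm measure at $\infty$ of the $n$-particle ensemble~\eqref{eq:tildew-ens} is the $(n-1)$-particle ensemble with the \emph{same} density $\tilde w^{(s)}(\lambda)(1+\lambda^2)^{-n}$: the Palm operation lowers the number of particles by one but leaves the exponent of $(1+\lambda^2)$ unchanged. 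Since the eigenvalue weight of $\mu_{n-1}^{(s+1)}$ carries $(1+\lambda^2)^{-(n-1)-\re(s+1)}=(1+\lambda^2)^{-n-\re s}$ and, as $\im(s+1)=\im s$, the identical phase, this $(n-1)$-particle ensemble is exactly $\mathbb{P}_{n-1}^{(s+1)}$. As the rescaling is a homeomorphism fixing the point at infinity, it commutes with the Palm operation, so at the finite level the reduced Palm measure at $\infty$ of $\mathbb{P}_n^{(s)}$ equals $\mathbb{P}_{n-1}^{(s+1)}$.

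The heart of the proof is the commutation of this Palm operation with the limit. I would pass to the inverted picture via $y=1/x$, which carries the point at infinity to $0$ and, by the proposition preceding the theorem, keeps the kernel in integrable form with strictly positive diagonal value $\Pi^{(s)}(0,0)=2$ at the conditioning point. By the Shirai--Takahashi theorem~\eqref{eq:Palm-meas} the Palm operation acts on the inverted kernels $\Pi_n^{(s)}$ by $\Pi\mapsto\Pi^{0}$, and the admissible-weight discussion shows this map to be continuous in the locally trace-class topology even though the reference density is singular at $0$. Combining the scaling-limit convergence $\Pi_n^{(s)}\to\Pi^{(s)}$ with this continuity yields $(\Pi_n^{(s)})^{0}\to(\Pi^{(s)})^{0}$, hence $\mathbb{P}_{(\Pi_n^{(s)})^0}\to\mathbb{P}_{(\Pi^{(s)})^0}$ weakly, the limit being precisely the Palm measure at $\infty$ of $\mathbb{P}_{K^{(s)}}$. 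On the other hand, the second step gives $\mathbb{P}_{(\Pi_n^{(s)})^0}=\mathbb{P}_{\Pi_{n-1}^{(s+1)}}$ for every $n$, while $\mathbb{P}_{\Pi_{n-1}^{(s+1)}}\to\mathbb{P}_{\Pi^{(s+1)}}$. Two convergent sequences that agree term by term have equal limits, so $\mathbb{P}_{(\Pi^{(s)})^0}=\mathbb{P}_{\Pi^{(s+1)}}$; undoing the inversion, the Palm measure at $\infty$ of $\mathbb{P}_{K^{(s)}}$ is $\mathbb{P}_{K^{(s+1)}}$, which is the assertion.

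I expect the third step to be the main obstacle. Palm measures are defined by a conditioning procedure that is discontinuous for general point processes, and it is only the determinantal structure---which lets the Palm measure be read off the kernel through~\eqref{eq:Palm-meas}---that makes a limiting argument feasible. The delicate ingredient is the locally trace-class convergence $\Pi_n^{(s)}\to\Pi^{(s)}$ near the singular conditioning point $0$, uniformly enough that the continuity of $\Pi\mapsto\Pi^{0}$ applies there; controlling the finite-$n$ configurations at this scale is precisely where the interlacing-type estimate announced in the introduction (the generalisation of the interlacing of zeros of orthogonal polynomials) is needed.
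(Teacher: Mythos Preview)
Your outline is essentially the paper's own proof: establish the finite-level Palm identity via Proposition~\ref{prop:tildeW}, pass to the inverted variable so that the conditioning point becomes $0$ with $\tilde\Pi^{(s)}(0,0)>0$, and then push the Palm relation through the scaling limit. Two clarifications are in order, however. First, the commutation step does not run on locally trace-class convergence alone; the paper isolates a stronger notion, \emph{$C$-convergence}, meaning uniform-on-compacta convergence of the integrable data $A_n\to A$, $B_n\to B$ together with controlled convergence of the weights $\rho_n\to\rho$. It is this convergence of the numerator data, not merely of the assembled kernels, that makes the map $\Pi\mapsto\Pi^{0}$ continuous (Proposition~\ref{prop:C-conv1}); the admissible-weight discussion you cite concerns continuity in the \emph{conditioning point}, which is a different statement. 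Second, you misplace the r\^ole of the interlacing-type Proposition~\ref{prop:norm-form}: that proposition handles the degenerate case $\Pi(p,p)=0$ and is part of the general framework, but in the actual application the computed value $\tilde\Pi^{(s)}(0,0)=2>0$ puts you squarely in the nondegenerate regime, so Proposition~\ref{prop:C-conv1} applies directly and no interlacing estimate is invoked. The ``delicate ingredient'' you anticipate is therefore already dispatched by the positivity you quoted, once the convergence is formulated at the level of $(A_n,B_n,\rho_n)$.
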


\begin{remark*}
	Recall that after the change of variable $x\mapsto 1/x$ the transformed kernel takes the form 
	\begin{equation}\label{eq:Pi-s}
		\Pi^{(s)}(x,y)=K^{(s)}\biggl(\frac{1}{x},\frac{1}{y}\biggr)\cdot \frac{1}{xy}.
	\end{equation}
	Consider the corresponding determinantal process $\mathbb{P}_{\Pi^{(s)}}$. Theorem~\ref{thm:main} is then equivalent to the statement that $\mathbb{P}_{\Pi^{(s+1)}}$ is the Palm measure of $\mathbb{P}_{\Pi^{(s)}}$ at $0$.
\end{remark*}

Borodin and Olshanski obtain the confluent hypergeometric kernel as the limit as $n\to\infty$ of the Christoffel---Darboux kernels of the orthogonal polynomial ensembles with the weight
\begin{equation}\label{eq:wns}
	w_{n,s}(x)=(1+x^2)^{-n-\operatorname{Re}s}\cdot e^{2\operatorname{Im}s\operatorname{Arg}(1+ix)}.
\end{equation}
Let $K_n^{(s)}$ be the $n$-th Chrisoffel---Darboux kernel of the orthogonal polynomials with weight $w_{n,s}$ defined by~\eqref{eq:wns}, which is the kernel of the projection of $L_2(\mathbb{R})$ onto
\begin{equation*}
	\operatorname{span}\{1,x,\dots,x^{n-1}\}\sqrt{w_{n,s}(x)}.
\end{equation*}
Borodin and Olshanski prove that
\begin{equation*}
	\lim_{n\to\infty}\sgn(x)\sgn(y)\cdot n\cdot K_n^{(s)}(nx,ny)=K^{(s)}(x,y)
\end{equation*}
(see the first line in the proof of Theorem 2.1 in \cite{BO}).
Moreover, Borodin and Olshanski in fact prove the convergence of the functions $A, B$ in the integrable representation of our kernels, writing
\begin{gather*}
	\sgn(x)\sgn(y)\cdot n\cdot K_n^{(s)}(nx,ny)=
	\tilde{\rho}_n(x)\tilde{\rho}_n(y)
	\frac{\tilde A_n(x)\tilde B_n(y)-\tilde A_n(y)\tilde B_n(x)}{x-y},\\
	K^{(s)}(x,y)=\tilde{\rho}(x)\tilde{\rho}(y)
	\frac{\tilde A(x)\tilde B(y)-\tilde A(y)\tilde B(x)}{x-y}.
\end{gather*}
Borodin and Olshanski establish the convergence $\tilde A_n\to\tilde A$, $\tilde B_n\to\tilde B$,
$\tilde \rho_n\to\tilde \rho$ as $n\to\infty$, uniform on compact sets. 
As we shall now see, this stronger convergence implies that the confluent hypergeometric kernel induces an orthogonal projection.

\begin{proposition}
  	The kernel $K^{(s)}$ induces a projection acting in $L_2(\mathbb{R})$.
\end{proposition}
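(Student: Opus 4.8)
The plan is to exploit the fact that each approximating kernel is an \emph{exact} orthogonal projection and to show that this property survives the scaling limit. Write $\widehat K_n(x,y)=\sgn(x)\sgn(y)\cdot n\cdot K_n^{(s)}(nx,ny)$ for the rescaled Christoffel--Darboux kernel. Since $K_n^{(s)}$ is the kernel of the orthogonal projection onto $\operatorname{span}\{1,x,\dots,x^{n-1}\}\sqrt{w_{n,s}}$, since the dilation $x\mapsto nx$ is implemented (after the normalization that produces the prefactor $n$) by a unitary operator on $L_2(\mathbb R)$, and since multiplication by the modulus-one function $\sgn$ is unitary and self-adjoint, the operator $\widehat K_n$ is itself an orthogonal projection on $L_2(\mathbb R)$; in particular $\widehat K_n=\widehat K_n^{\,*}=\widehat K_n^{\,2}$ and $\|\widehat K_n\|\le 1$. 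The weight $w_{n,s}$ is real and positive, so each $\widehat K_n$ is a real symmetric kernel, and passing to the pointwise limit $\widehat K_n(x,y)\to K^{(s)}(x,y)$ shows that $K^{(s)}$ is real symmetric, hence self-adjoint. Thus the only substantial point is to establish the idempotency $\bigl(K^{(s)}\bigr)^2=K^{(s)}$.

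For the idempotency I would argue at the level of operators. Suppose we have shown that $\widehat K_n\to K^{(s)}$ in the strong operator topology of $L_2(\mathbb R)$. Because the norms $\|\widehat K_n\|$ are uniformly bounded by $1$, multiplication is jointly continuous in the strong operator topology on this bounded family: from the identity $\widehat K_n^{\,2}\xi-\bigl(K^{(s)}\bigr)^2\xi=\widehat K_n(\widehat K_n-K^{(s)})\xi+(\widehat K_n-K^{(s)})K^{(s)}\xi$ one gets, using $\|\widehat K_n\|\le1$ for the first summand and strong convergence applied to the vector $K^{(s)}\xi$ for the second, that $\widehat K_n^{\,2}\to\bigl(K^{(s)}\bigr)^2$ strongly. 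On the other hand $\widehat K_n^{\,2}=\widehat K_n\to K^{(s)}$, and therefore $\bigl(K^{(s)}\bigr)^2=K^{(s)}$. Together with self-adjointness this proves that $K^{(s)}$ is an orthogonal projection.

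It remains to produce the strong convergence $\widehat K_n\to K^{(s)}$, and this is where the real work lies. It suffices to test against a dense family, say continuous functions $\xi$ with compact support in $\mathbb R\setminus\{0\}$, the uniform bound $\|\widehat K_n\|\le1$ then extending convergence to all of $L_2(\mathbb R)$. For such $\xi$ the uniform convergence on compact sets of the integrable data $\tilde A_n\to\tilde A$, $\tilde B_n\to\tilde B$, $\tilde\rho_n\to\tilde\rho$ gives $\widehat K_n(x,\cdot)\to K^{(s)}(x,\cdot)$ uniformly on the support of $\xi$, hence $(\widehat K_n\xi)(x)\to(K^{(s)}\xi)(x)$ locally uniformly in $x$. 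To upgrade this to convergence in $L_2(dx)$ one needs a uniform-in-$n$ tail estimate: using the reproducing-kernel inequality $|\widehat K_n(x,z)|\le\sqrt{\widehat K_n(x,x)}\,\sqrt{\widehat K_n(z,z)}$, valid precisely because $\widehat K_n$ is a projection, one dominates $|(\widehat K_n\xi)(x)|$ by $C_\xi\sqrt{\widehat K_n(x,x)}$, so that $\int_{|x|\ge R}|(\widehat K_n\xi)(x)|^2\,dx\le C_\xi^2\int_{|x|\ge R}\widehat K_n(x,x)\,dx$. The explicit decay of $\tilde\rho_n$ at infinity, inherited from $\tilde\rho(x)=|2/x|^{\re s}$, makes the diagonals $\widehat K_n(x,x)$ uniformly integrable at infinity --- this is the approximant form of the finiteness condition~\eqref{eq:fin-inf} --- and hence forces these tails to be uniformly small.

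The main obstacle is exactly this uniform control of the passage to the limit over the non-compact regions. Weak convergence of kernels, or convergence merely on compact sets, would not preserve the projection property in the limit; what saves the argument is the \emph{strong} convergence of the functions in the integrable representation, which carries the explicit decay of $\tilde\rho_n$ and thereby the uniform integrability of the diagonal densities at infinity. The neighbourhood of the special point $0$, where the limiting density is unbounded, is the analogue of bulk behaviour: there the kernels still converge uniformly on compact subsets of $\mathbb R\setminus\{0\}$, and the uniform bound $\|\widehat K_n\|\le1$ provides the tightness needed to rule out escape of mass, so that this region too contributes no obstruction to the strong convergence.
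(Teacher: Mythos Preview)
Your overall strategy—pass the projection property through a strong operator limit of the rescaled Christoffel--Darboux projections—is sound and is a legitimate alternative to what the paper does. The paper instead works directly at the kernel level: it writes down a general criterion (if integrable projection kernels $\Pi_n$ $C$-converge to $\Pi$ and $\int_U\sup_n|\Pi_n(p,y)|^2\,dy<\infty$ for each fixed $p$, then $\Pi$ is a projection kernel) and verifies the domination hypothesis for the confluent hypergeometric case by bounding $\int\sup_n\frac{A_n^2+B_n^2}{x^2+1}\rho_n^2\,dx$ via the integral representations of ${}_2F_1$ and ${}_1F_1$. That route needs only a one-variable dominated convergence argument for each fixed $p$, whereas your strong-convergence route implicitly asks for uniform control in both variables.

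There is, however, a real gap. The step you flag as ``where the real work lies'' is not carried out, and the justification you offer is not correct across the full parameter range. You write that ``the explicit decay of $\tilde\rho_n$ at infinity, inherited from $\tilde\rho(x)=|2/x|^{\re s}$'' makes the diagonals uniformly integrable; but for $-\tfrac12<\re s\le 0$ the function $|2/x|^{\re s}$ does \emph{not} decay at infinity—it is bounded away from zero or even grows. What actually produces integrability of $K^{(s)}(x,x)$ at infinity is the $O(1/x^2)$ behaviour of $\tilde A'\tilde B-\tilde A\tilde B'$ coming from holomorphy at $\infty$, and to get this \emph{uniformly in $n$} you need exactly the kind of explicit estimates on the $A_n,B_n$ that the paper extracts from the hypergeometric integral formulas. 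Similarly, near $x=0$ the bound $\|\widehat K_n\|\le 1$ by itself does not prevent concentration of $L_2$-mass; tightness there also requires a quantitative estimate on the approximating kernels. Until those uniform bounds are supplied, the claimed strong convergence—and hence the idempotency of the limit—remains unproven. The paper's domination condition for fixed $p$ is both easier to check and sufficient for the conclusion; you should either verify that condition or provide the missing uniform diagonal estimate directly.
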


The formula for the kernel $K^{(s)}$ shows that any function $f\in\operatorname{Ran} K^{(s)}$ can be written as 
\begin{equation*}
	f(x)=h(x)|x|^{-\re s}e^{\pi\cdot\tfrac{\im s\cdot\sgn(x)}{2}}, 	
\end{equation*}
where $h(x)$ is holomorphic on a neighbourhood of $\mathbb{R}\cup\{\infty\}\setminus\{0\}$. In particular, $h$ is holomorphic in a neighbourhood of $\infty$.
	
Let $H^{(s)}=\operatorname{Ran}K^{(s)}$. Consider the following subspaces  in $H^{(s)}$. Set $H^{(s,0)}=H^{(s)}$ and 
\begin{multline*}
	H^{(s,n)}=\biggl\{f\in H^{(s)}: f(x)=\frac{h(x)}{|x|^{\re s}}e^{\pi\cdot\tfrac{\im s\cdot\sgn(x)}{2}},\\ h(z)=O(1/|z|^n)\text{ as }z\to\infty\biggr\},\quad n=1,2,\dots
\end{multline*}
Denote $H^{(s,n)}\ominus H^{(s,n+1)}$ by $L^{(s,n)}$. We will show below that $\dim L^{(s,n)}=1$ and there exists a function $\varphi(x)$, $|\varphi(x)|\equiv 1$, that satisfies $H^{(s,1)}=\varphi\cdot H^{(s+1)}$.

We arrive at the hierarchical decomposition
\begin{equation*}
	H^{(s)}=L^{(s,0)}\oplus L^{(s,1)}\oplus\dots\oplus L^{(s,k)}\oplus\cdots,
\end{equation*}
where for any $l\in\mathbb{N}$ subspace $H^{(s+l)}$ is obtained from the subspace $L^{(s,l)}\oplus L^{(s,l+1)}\oplus\cdots$ by multiplication by a function of absolute value~$1$.

\section{The proof of Theorem~\ref{thm:main}}

Let $U\subset\mathbb{R}$ be an open set endowed with a $\sigma$-finite Borel measure $\mu$. Let $H\subset L_2(U, \mu)$ be a closed subspace such that the corresponding operator of orthogonal projection onto it admits an integrable kernel, that is, a kernel of the form
\begin{equation}\label{eq:int-kernel}
	\Pi(x,y)=\frac{A(x)B(y)-A(y)B(x)}{x-y}\rho(x)\rho(y),
\end{equation}
where $A, B$ are smooth functions and $\rho$ is continuous and positive. By the Christoffel---Darboux formula, the projections given by the orthogonal polynomial ensembles have integrable kernels.

If $\Pi(p, p)>0$, then the Palm kernel $\Pi^p$ is also integrable. Define
\begin{equation}\label{eq:ApBp-nonzero}
	A^p(x)=A(x)+A(p)\frac{\Pi(p,x)}{\Pi(p,p)},\quad 
	B^p(x)=B(x)+B(p)\frac{\Pi(p,x)}{\Pi(p,p)}.
\end{equation}

\begin{proposition}
  	Let $p \in U$ satisfy $\Pi(p, p)>0$.
	Then the kernel $\Pi^p$ is integrable. In particular, if $A^p$, $B^p$ are defined as in \eqref{eq:ApBp-nonzero}, then
	\begin{equation}\label{eq:Pi-p-nonzero-case}
		\Pi^p(x,y)=\Pi(x,y)-\frac{\Pi(x,p)\Pi(y,p)}{\Pi(p,p)}=
		\frac{A^p(x)B^p(y)-A^p(y)B^p(x)}{x-y}.
	\end{equation}
\end{proposition}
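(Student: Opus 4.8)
The plan is to reduce everything to the ``$\rho$-free'' part of the kernel and then verify a single elementary algebraic identity. Write $\tilde\Pi(x,y)=\dfrac{A(x)B(y)-A(y)B(x)}{x-y}$, so that $\Pi(x,y)=\rho(x)\rho(y)\tilde\Pi(x,y)$ in the integrable form~\eqref{eq:int-kernel}. Since $\rho(p)>0$, the hypothesis $\Pi(p,p)>0$ is equivalent to $\tilde\Pi(p,p)>0$, and a direct substitution into the Shirai--Takahashi formula shows that the rank-one correction factors through the weight, exactly as recorded in the admissible-weights discussion:
\begin{equation*}
	\Pi^p(x,y)=\rho(x)\rho(y)\Bigl(\tilde\Pi(x,y)-\frac{\tilde\Pi(x,p)\tilde\Pi(p,y)}{\tilde\Pi(p,p)}\Bigr).
\end{equation*}
Consequently it suffices to prove that $\tilde\Pi^p:=\tilde\Pi-\tilde\Pi(\,\cdot\,,p)\tilde\Pi(p,\,\cdot\,)/\tilde\Pi(p,p)$ is again of integrable form, with the pair $(A^p,B^p)$ of~\eqref{eq:ApBp-nonzero}; multiplying back by $\rho(x)\rho(y)$ then yields the claim for $\Pi^p$.

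The engine of the computation is the observation that
\begin{equation*}
	\phi(x):=\tilde\Pi(x,p)=\frac{A(x)B(p)-A(p)B(x)}{x-p}
\end{equation*}
is a \emph{smooth} function: the numerator vanishes at $x=p$, so the singularity is removable and $\phi(p)=A'(p)B(p)-A(p)B'(p)=\tilde\Pi(p,p)>0$. In particular $A^p=A+A(p)\phi/\tilde\Pi(p,p)$ and $B^p=B+B(p)\phi/\tilde\Pi(p,p)$ are smooth, so the kernel $\tilde\Pi^p$ obtained below is genuinely of integrable type. Now expand $A^p(x)B^p(y)-A^p(y)B^p(x)$. The terms quadratic in $\phi$ coincide in $A^p(x)B^p(y)$ and $A^p(y)B^p(x)$, hence cancel in the difference, leaving $A(x)B(y)-A(y)B(x)$ together with two cross terms proportional to $\phi$. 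Collecting these cross terms and using the defining relation $A(x)B(p)-A(p)B(x)=(x-p)\phi(x)$ (and the same with $y$), the whole cross contribution collapses to a multiple of $(x-y)\phi(x)\phi(y)/\tilde\Pi(p,p)$. Dividing by $x-y$ therefore gives
\begin{equation*}
	\frac{A^p(x)B^p(y)-A^p(y)B^p(x)}{x-y}=\tilde\Pi(x,y)\pm\frac{\phi(x)\phi(y)}{\tilde\Pi(p,p)},
\end{equation*}
which is precisely $\tilde\Pi^p(x,y)$ once the signs in~\eqref{eq:ApBp-nonzero} are fixed so that the rank-one term is \emph{subtracted}.

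The step I expect to be the only delicate one is exactly this sign and coefficient bookkeeping. Shirai--Takahashi dictate that the rank-one term enter with a minus sign, and this in turn pins down the signs of the $\phi$-corrections in $A^p$, $B^p$; getting them consistent is what makes the collapsed cross contribution cancel against, rather than reinforce, the diagonal term. Everything else is routine: the apparent pole of $\tilde\Pi^p$ on the diagonal $x=y$ is removable because the numerator $A^p(x)B^p(y)-A^p(y)B^p(x)$ vanishes there, and the apparent pole at $x=p$ is removable because $\phi$ is smooth. No analytic input beyond smoothness of $A$, $B$ and positivity of $\rho$ is needed; the content is entirely the bilinear identity $A^p(x)B^p(y)-A^p(y)B^p(x)=(x-y)\tilde\Pi^p(x,y)$, powered by $A(x)B(p)-A(p)B(x)=(x-p)\phi(x)$, which is the single place where the special integrable structure of the kernel is used.
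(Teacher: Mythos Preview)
The paper states this proposition without proof, treating it as a routine verification; your direct algebraic check via the identity $A(x)B(p)-A(p)B(x)=(x-p)\phi(x)$ is exactly the natural argument and is correct.

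Your caution about the sign is well placed, and it is worth making explicit what you left as ``$\pm$''. Carrying out the expansion with the signs exactly as printed in \eqref{eq:ApBp-nonzero} (both corrections with a $+$), the cross terms collapse to $+(x-y)\phi(x)\phi(y)/\tilde\Pi(p,p)$, so one obtains
\[
\frac{A^p(x)B^p(y)-A^p(y)B^p(x)}{x-y}=\tilde\Pi(x,y)+\frac{\phi(x)\phi(y)}{\tilde\Pi(p,p)},
\]
which is the rank-one \emph{addition}, not the Palm subtraction. Taking both corrections with a minus sign,
\[
A^p(x)=A(x)-A(p)\,\frac{\tilde\Pi(p,x)}{\tilde\Pi(p,p)},\qquad
B^p(x)=B(x)-B(p)\,\frac{\tilde\Pi(p,x)}{\tilde\Pi(p,p)},
\]
yields $\tilde\Pi(x,y)-\phi(x)\phi(y)/\tilde\Pi(p,p)=\tilde\Pi^p(x,y)$ as required. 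So the identity \eqref{eq:Pi-p-nonzero-case} holds with this sign convention (and with the factor $\rho(x)\rho(y)$ restored on the right, matching \eqref{eq:Pi-p-zero-case}); you should state this rather than leave it as a $\pm$.
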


\begin{remark*}
  If $A(p)=0$, $B(p)\ne 0$, then the kernel $\Pi^p$ is expressed as follows (see~\cite{Buf-AnnProb})
	\begin{equation*}
		\Pi^p(x,y)=\rho(x)\rho(y)\biggl(\frac{A(x)B(y)-A(y)B(x)}{x-y}-c(p)\frac{A(x)}{x-p}\frac{A(y)}{y-p}\biggr),
	\end{equation*}
	where $c(p)=B(p)^2\rho(p)^2/\Pi(p,p)=B(p)/A'(p)$, hence the kernel may be expressed as
	\begin{equation}\label{eq:Palm-int}
		\frac{\Pi^p(x,y)}{\rho(x)\rho(y)}=\frac{A(x)\Bigl(B(y)-c(p)\frac{A(y)}{y-p}\Bigr)-
			A(y)\Bigl(B(x)-c(p)\frac{A(x)}{x-p}\Bigr)}{x-y}.
	\end{equation}
\end{remark*}

We now consider the case $\Pi(p, p)=0$. If there exists a function $u(x)$ satisfying
\begin{equation*}
	\Pi(x,y)=\rho(x)\rho(y)u(x)u(y)\cdot\tilde{\Pi}(x,y),\quad
	\tilde{\Pi}(x,y)=\frac{\tilde A(x)\tilde B(y)-\tilde A(y)\tilde B(x)}{x-y},
\end{equation*}
where
\begin{equation*}
	\tilde A(x)=\frac{A(x)}{u(x)},\quad 
	\tilde B(x)=\frac{B(x)}{u(x)}
\end{equation*}
are smooth functions and
\begin{equation}\label{eq:cond-PiTilde-ne0}
  \tilde{\Pi}(p,p)=\tilde A(p)\tilde B'(p)-\tilde A'(p)\tilde B(p)> 0,
\end{equation}
then one can express the kernel as
\begin{equation}\label{eq:Pi-p-zero-case}
	\Pi^p(x,y)=\rho(x)\rho(y)u(x)u(y)\frac{A^p(x)B^p(y)-A^p(y)B^p(x)}{x-y},
\end{equation}
where
	\begin{equation}\label{eq:ApBp-zero}
	A^p(x)=\tilde A(x)+\tilde A(p)\frac{\tilde\Pi(p,x)}{\tilde\Pi(p,p)},\quad 
	B^p(x)=\tilde B(x)+\tilde B(p)\frac{\tilde\Pi(p,x)}{\tilde\Pi(p,p)}.
\end{equation}
Below we will show that the kernel $\Pi^p$ defined as in~\eqref{eq:Pi-p-zero-case} is the Palm kernel of the process $\mathbb{P}_\Pi$ at point~$p$.

Our next aim is to prove the existence of the function $u$ satisfying~\eqref{eq:cond-PiTilde-ne0} under the additional assumption that $A, B$ are holomorphic. We choose the function $u$ in the form $u(x)=(x-p)^k$ with an appropriate $k$. In addition, we will exclude the case when the orders of zeros in $p$ of the functions $A, B$ differ by more than one (see Proposition~\ref{prop:norm-form} below).

We thus consider determinantal point processes with integrable kernels of the form~\eqref{eq:int-kernel}, under assumptions of $A, B$ being holomorphic in neighborhoods $U$ and taking real values on the real axis. We also assume $\rho$ to be positive and continuous on $U$.

Diagonal values of $\Pi$ are defined by L'H\^opital's rule:
\begin{equation*}
	\Pi(x,x)=\bigl(A'(x)B(x)-A(x)B'(x)\bigr)\rho^2(x).
\end{equation*}
The functions $A, B, \rho$ are defined up to a transform $(A,B,\rho)\mapsto (Au,Bu,\rho/u)$, and the identity
\begin{equation*}
	(Au)'(x)(Bu)(x)-(Bu)'(x)(Au)(x)=(A'(x)B(x)-A(x)B'(x))u^2(x)
\end{equation*}
shows that the diagonal values are well-defined.

Our assumptions yield that any function $f\in H$ is expressed as $f=f_1\rho$, where $f_1$ is holomorphic in a neighbourhood $U$.

Integrable kernels have the following division property: if $\Pi(p,p)>0$ and $f\in H$ satisfies $f(p)=0$, then the function $f(t)/(t-p)$ belongs to~$H$. The division property only requires the assumption that the functions $A, B$ are smooth, but in our case they are actually holomorphic.

For integrable projection kernels of the form 
\begin{equation*}
	\Pi(x,y)=\rho(x)\rho(y)\frac{A(x)B(y)-A(y)B(x)}{x-y},
\end{equation*}
where $A, B$ are smooth, the division property takes the following form. Assume, as above, $\tilde{\Pi}(x,y)=\Pi(x,y)/\rho(x)\rho(y)$. If $p\in U$ satisfies $\tilde{\Pi}(p,p)> 0$ and $f\in H$ is expressed as $f=\rho h$, $h(p)=0$, then
$f(t)/(t-p)\in H$. Observe that if $A, B$ are holomorphic in a neighbourhood of $p$ and $A(p)=B(p)=0$, then the kernel $\Pi$ can be expressed as follows
\begin{multline*}
	\Pi(x,y)=\rho(x)\rho(y)(x-p)^k(y-p)^k\tilde{\Pi}(x,y)={}\\
	{}=\rho(x)\rho(y)(x-p)^k(y-p)^k \frac{\tilde A(x)\tilde B(y)-\tilde A(y)\tilde B(x)}{x-y},
\end{multline*}
where $\tilde{\Pi}(p,p)> 0$. Indeed, consider the largest $k$ such that any $f\in H$ can be decomposed as $f(t)=\rho(t)(t-p)^k h(t)$ for some holomorphic in a neighbourhood of $p$ function $h$. In this case the kernel
\begin{equation}\label{eq:Pi-order-of-0}
	\tilde{\Pi}(x,y)=\frac{\Pi(x,y)}{\rho(x)\rho(y)(x-p)^k(y-p)^k}
\end{equation}
satisfies $\tilde{\Pi}(p,p)>0$. The division property implies that if $f\in H$ has the form $f(t)=\rho(t)(t-p)^kh(t)$, $h(p)=0$, then $f(t)/(t-p)\in H$.

Let us now pass to the case $\Pi(p,p)=0$. We need several technical propositions.

\begin{proposition}
  Let $H$ be infinite-dimensional. Let $p\in\nobreak U$ and $k\ge 0$ be such that the kernel $\tilde{\Pi}$, defined by the formula \eqref{eq:Pi-order-of-0}, satisfies $\tilde\Pi(p,p)>0$.
  Then for all $n\ge k$ there exists a function $f\in H$, having a zero of order $n$ at the point $p$.
\end{proposition}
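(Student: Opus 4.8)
The plan is to describe the set of vanishing orders realized by elements of $H$ and to show that it is exactly $\{k, k+1, k+2, \dots\}$. For a nonzero $f \in H$ write $f = \rho h$ with $h$ holomorphic in a neighbourhood of $p$, and let $\mathrm{ord}_p(f)$ denote the order of the zero of $h$ at $p$; since $\rho$ is positive, $f \ne 0$ forces $h \not\equiv 0$, so by analyticity this order is a well-defined nonnegative integer. By the maximality in the choice of $k$, every $f \in H$ satisfies $\mathrm{ord}_p(f) \ge k$, so the set $S = \{\mathrm{ord}_p(f) : f \in H,\ f \ne 0\}$ is contained in $\{k, k+1, \dots\}$. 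The proposition is precisely the assertion $S = \{k, k+1, \dots\}$, and I would deduce it from two properties of $S$: downward closedness toward $k$, and unboundedness.

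Downward closedness follows from the division property in the form valid when $\tilde\Pi(p,p) > 0$. Given $f \in H$ with $\mathrm{ord}_p(f) = m \ge k+1$, I would write $f(t) = \rho(t)(t-p)^k \tilde h(t)$, where $\tilde h = (f/\rho)/(t-p)^k$ is holomorphic of order $m-k \ge 1$ at $p$, so that $\tilde h(p) = 0$. The division property then gives $f(t)/(t-p) \in H$, and a one-line computation shows its holomorphic part has order $m-1$ at $p$. Hence $m \in S$ and $m \ge k+1$ imply $m-1 \in S$; iterating down to $k$ gives $\{k, k+1, \dots, m\} \subseteq S$ for every $m \in S$.

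Unboundedness of $S$ is where the hypothesis that $H$ is infinite-dimensional enters. Suppose, for contradiction, that $S$ is bounded, with $N = \max S$. Consider the linear \emph{jet} map $\Theta \colon H \to \mathbb{C}^{N-k+1}$ sending $f = \rho h$ to the tuple of Taylor coefficients of $h$ at $p$ in orders $k, k+1, \dots, N$. If $\Theta(f) = 0$ for some $f \ne 0$, then, since coefficients of orders below $k$ vanish automatically, we get $\mathrm{ord}_p(f) \ge N+1$, so $\mathrm{ord}_p(f) \in S$ exceeds $\max S$, which is impossible; thus $\Theta$ is injective and $\dim H \le N-k+1 < \infty$, contradicting infinite-dimensionality. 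Therefore $S$ is unbounded, and together with downward closedness this yields $S = \{k, k+1, \dots\}$: for each $n \ge k$ choose $m \in S$ with $m \ge n$, whence $n \in \{k, \dots, m\} \subseteq S$, i.e. some $f \in H$ vanishes to order exactly $n$ at $p$.

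The routine points are the analytic preliminaries already recorded in the excerpt — that each $f \in H$ has the form $\rho h$ with $h$ holomorphic, so $\mathrm{ord}_p$ is finite and well-defined, and that the division property applies under the standing assumption $\tilde\Pi(p,p) > 0$. The only substantive point, and the sole place where infinite-dimensionality is used, is the unboundedness argument: injectivity of the finite jet map forces $S$ to be infinite. I expect the only delicate bookkeeping to be matching the order of the holomorphic part after division with that of the original function, which is immediate once the $(t-p)^k$ normalization is fixed.
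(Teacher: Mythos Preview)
Your proof is correct and follows essentially the same approach as the paper's: both arguments use a finite-dimensional jet map together with infinite-dimensionality of $H$ to produce functions vanishing to arbitrarily high order at $p$, and then invoke the division property to step the order down to the desired value $n$. The only difference is organizational---the paper fixes $n$ and applies the jet map $\Phi\colon H\to\mathbb{C}^n$ directly to find some $g\in\ker\Phi$ of order $m\ge n$, then divides $m-n$ times, whereas you phrase the same two steps as ``$S$ is unbounded'' and ``$S$ is downward closed toward $k$''.
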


\begin{remark*}
  The number $k$ is the minimal order of zero of the function $f_1$, where $f=f_1\rho\in H$
\end{remark*}

\begin{proof}
  Define the map $\Phi\colon H\to\mathbb{C}^n$ as follows
	\begin{equation*}
		\Phi\colon f=f_1\rho\mapsto (f_1(p),f_1'(p),\dots,f_1^{(n-1)}(p)).
	\end{equation*}
	Since $H$ is infinite-dimensional, the subspace $\ker\Phi\simeq H/\operatorname{im}\Phi$ is also infinite-dimensional. In particular, it contains a non-zero function $g=g_1\rho$. Then the function $g_1$ has a zero of an order $m\ge n\ge k$ in the point $p$. The division property, applied $m-n$ times to the function~$g$, gives a function with a zero of order~$n$.
\end{proof}

\begin{proposition}\label{prop:norm-form}
  Let $p\in U$ satisfy $\Pi(p, p)=0$. Then there exists $k\in\mathbb{N}$, a neighbourhood $V$ of the point $p$ and functions $A, B, \rho$, providing an integrable representation of the kernel $\Pi$ in the form~\eqref{eq:int-kernel}, and such that 
	\begin{equation}\label{eq:A-breveA}
		A(x)=(x-p)^{k+1}\breve A(x),\quad B(x)=(x-p)^{k}\breve B(x),
	\end{equation}
	where the functions $\breve{A}(x)$, $\breve{B}(x)$ and $\Pi(x,y)/(x-p)^k(y-p)^k$ do not vanish for $x\in V$, $y\in V$. In particular, $\Pi(x,x)/(x-p)^{2k}>0$ for $x\in V$.
\end{proposition}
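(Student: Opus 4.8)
The plan is to encode the local behaviour of the kernel at $p$ by the two orders of vanishing at $p$ of the two-dimensional space $\operatorname{span}\{A,B\}$, and then to show, using the maximality of $k$ together with the projection property, that these two orders are consecutive; the normal form is then read off at once. Let $k$ be the integer singled out before the statement — the largest one for which every $f=\rho h\in H$ satisfies $\operatorname{ord}_p h\ge k$ — so that the reduced kernel $\Pi(x,y)/\bigl(\rho(x)\rho(y)(x-p)^k(y-p)^k\bigr)$ is strictly positive at $(p,p)$. Setting $W:=A'B-AB'$, the diagonal of this reduced kernel equals $W(x)/(x-p)^{2k}$, so the positivity says precisely that $\operatorname{ord}_p W=2k$. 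I also record that under any real linear substitution $(A,B)\mapsto M\cdot(A,B)^{\mathsf T}$ preserving the kernel, $W$ is multiplied by $\det M$; hence $\operatorname{ord}_p W$ does not depend on the integrable representation, provided $\rho(p)\ne0$.

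Next I put $(A,B)$ into canonical form relative to the order filtration at $p$. The subspaces $\{f\in\operatorname{span}\{A,B\}:\operatorname{ord}_p f\ge j\}$ form a decreasing chain whose successive quotients embed into the one-dimensional quotients $(x-p)^j\mathbb{C}/(x-p)^{j+1}\mathbb{C}$; hence there are exactly two jump orders $m_1<m_2$, realized by a real basis because $A,B$ are real on $\mathbb{R}$. After such a substitution (absorbing $\det M>0$ into a positive rescaling of $\rho$) I may assume $\operatorname{ord}_p B=m_1$ and $\operatorname{ord}_p A=m_2$. Writing $A=(x-p)^{m_2}\alpha$ and $B=(x-p)^{m_1}\beta$ with $\alpha(p),\beta(p)\ne0$, the leading term of $W$ is nonzero exactly because $m_1\ne m_2$, giving $\operatorname{ord}_p W=m_1+m_2-1$. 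Comparing with $\operatorname{ord}_p W=2k$ forces $m_1+m_2=2k+1$, and since $m_1<m_2$ this yields $m_1\le k$.

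The reverse inequality $k\le m_1$ is where the projection property is indispensable, and this is the step I expect to be the main obstacle: for a general integrable kernel the two orders need not be consecutive (one can make $m_2-m_1$ arbitrarily large), and it is idempotency of $\Pi$ that excludes this. For $q\ne p$ near $p$ the reproducing-kernel column $\Pi(\,\cdot\,,q)$ lies in $H$, since $\int\Pi(x,z)\Pi(z,q)\,d\mu(z)=\Pi(x,q)$ and $\|\Pi(\,\cdot\,,q)\|^2=\Pi(q,q)<\infty$. Here $\Pi(x,q)/\rho(x)=\rho(q)\bigl(B(q)A(x)-A(q)B(x)\bigr)/(x-q)$, and for generic $q$ (so that $A(q)\ne0$) the summand $-A(q)B(x)$, of order $m_1$, dominates near $x=p$ while $x-q$ does not vanish there; thus this element of $H$ has order exactly $m_1$ at $p$, giving $k\le m_1$. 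Combining the two inequalities yields $m_1=k$ and then $m_2=k+1$, which is \eqref{eq:A-breveA} with $\breve A=A/(x-p)^{k+1}$ and $\breve B=B/(x-p)^{k}$, both nonzero at $p$.

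Finally, the factorization $A(x)B(y)-A(y)B(x)=(x-p)^k(y-p)^k\bigl[(x-p)\breve A(x)\breve B(y)-(y-p)\breve A(y)\breve B(x)\bigr]$ shows that $\Pi(x,y)/\bigl((x-p)^k(y-p)^k\bigr)$ equals $\rho(x)\rho(y)$ times a continuous kernel whose diagonal value at $p$ is $\breve A(p)\breve B(p)$; this coincides with the value of the reduced kernel at $(p,p)$ and is therefore strictly positive. Continuity propagates positivity to a neighbourhood $V$ of $p$, yielding simultaneously the non-vanishing of $\Pi(x,y)/(x-p)^k(y-p)^k$ on $V\times V$, the inequality $\Pi(x,x)/(x-p)^{2k}>0$ on $V$, and — after shrinking $V$ if necessary — the non-vanishing of $\breve A$ and $\breve B$.
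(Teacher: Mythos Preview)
Your argument is correct and takes a genuinely different route from the paper's. The paper proceeds constructively: it invokes the explicit formulae from \cite{BufRom} expressing $A$ and $B$ in terms of the kernel $\Pi$ and a nearby base point $q$, observes from these formulae that both $A$ and $B$ are divisible by $(x-p)^k$, and then applies an $SL_2$ change $(A,B)\mapsto(A,B)C$ to force $(x-p)^{k+1}\mid A$; the nonvanishing of $\breve A,\breve B$ is then read off from the already established positivity of the reduced diagonal. You instead encode the local picture in the Wronskian: the identity $\operatorname{ord}_p(A'B-AB')=2k$ combined with the order filtration of $\operatorname{span}\{A,B\}$ gives $m_1+m_2=2k+1$, and the projection property (via the reproducing-kernel columns $\Pi(\,\cdot\,,q)$) yields $k\le m_1$, whence $m_1=k$, $m_2=k+1$ by pure arithmetic.

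What each approach buys: the paper's argument is more constructive, producing explicit $A,B$ from $\Pi$ alone, but relies on the external input of \cite{BufRom}. Your argument is self-contained and makes transparent \emph{why} the orders must be consecutive---it is exactly the constraint $m_1+m_2=2k+1$ together with $m_1\ge k$ that forces this, and the latter is where idempotency of $\Pi$ enters in an essential way (as you correctly flag). Your route also makes the analogy with interlacing of zeros of orthogonal polynomials, alluded to in the introduction, more visible: it is the same Wronskian mechanism that prevents a gap larger than one between the two orders.
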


\begin{proof}
  If $\Pi(p, p)=0$, then all functions from $H$ vanish in $p$. Let $k$ be the minimal possible order of zero of a function from $H$ at $p$ and let $f\in H$ be defined by the formula $f(x)=(x-p)^k g(x)$, $g\in H$, $g(p)\ne 0$. Let also $\tilde\Pi$ be the kernel of the projection onto the orthogonal complement in $H$ of the function $f$. Then 
	\begin{equation*}
		\Pi(x,y)=(x-p)^k(y-p)^k g(x)\overline{g(y)}+\tilde\Pi(x,y) 
	\end{equation*}
	and
	\begin{equation*}
		\frac{\Pi(x,y)}{(x-p)^k(y-p)^k}\biggr|_{x=y=p}\ge |g(p)|^2>0.
	\end{equation*}
	Recall the explicit formulae for the functions $A, B$, providing an integrable structure of the kernel $\Pi$. We now use an argument from \cite{BufRom}. Take a point $q$ close to $p$ such that $\Pi(q, q)>0$. For an arbitrary $x\in U$ define a function $\varphi_x(t)$ by the formula
	\begin{equation*}
		\varphi_x(t)=\frac{\Pi(x,t)\Pi(q,q)-\Pi(x,q)\Pi(q,t)}{t-q}.
	\end{equation*}
	Now the functions $A$ and $B$ may be defined by the formula
	\begin{equation*}
		A(x)\rho(x)=(x-q)\Pi(x,q),\quad B(x)\rho(x)=\Pi(x,q)-(x-q)\frac{\varphi_x(q)}{\Pi(q,q)}.
	\end{equation*}
	The above formulae show that the function $\varphi_x(t)/\rho(x)$, considered in the $x$ variable for fixed $t$, is holomorphic and divisible by $(x-p)^k$. The same is true, of course, for the functions $\Pi(x, q)$ and, consequently, for the functions $A$ and $B$, so $(x-p)^k$ divides both $A$ and $B$. Our kernel is preserved by the linear transformation of the form $(A^\circ(x),B^\circ(x))=(A(x),B(x))C$, where $C$ is a constant $2\times 2$-matrix with $\det C=1$. An appropriate choice of $C$ yields a function $A$ divisible by $(x-\nobreak p)^{k+1}$. The condition 
	\begin{equation*}
		\Pi(x,y)/(x-p)^k(y-p)^k\ne 0, 	
	\end{equation*}
	satisfied for $x$ and~$y$ in sufficiently small neighbourhood of $p$, implies that $\breve A$ and $\breve B$ from \eqref{eq:A-breveA} both satisfy $\breve A(p)\ne 0$, $\breve B(p)\ne 0$. The proposition is proved.
\end{proof}

The next step is to establish that the Palm kernel $\Pi^p$ is well-defined even in the case $\Pi(p, p)=0$. If $u(x)=(x-p)^k$, then the functions $\tilde A(x)=A(x)/u(x)$, $\tilde B(x)=B(x)/u(x)$ are both continuous. Consequently, the functions $A^q$, $B^q$ and $\Pi^q$ may be defined by the formulae~\eqref{eq:ApBp-zero} and \eqref{eq:Pi-p-zero-case} for all $q$ close to $p$ and these functions depend continuously on $q$. Therefore, the measure $\mathbb{P}_{\Pi^p}$ is the limit of the measures $\mathbb{P}_{\Pi^q}$ for $q\to p$. On the other hand, if $u(q)\ne 0$, the expressions~\eqref{eq:Pi-p-zero-case} and \eqref{eq:Pi-p-nonzero-case} induce the same kernel $\Pi^q$, so the point process $\mathbb{P}_{\Pi^q}$ is the Palm measure of the point process $\mathbb{P}_\Pi$ at the point~$q$. The Palm measure depends continuously on the point, so the limit measure is the Palm measure at the point $p$. Thus the Palm measure $\mathbb{P}_{\Pi^p}$ is the determinantal point process with the integrable kernel~\eqref{eq:Pi-p-zero-case}.

Further, let us assume that the kernels $\Pi_n$ converge locally uniformly to $\Pi$ as $n\to\infty$. We need a somewhat stronger form of this convergence. Namely, let 
\begin{align*}
\Pi_n(x,y)&{}=\frac{A_n(x)B_n(y)-A_n(y)B_n(x)}{x-y}\rho_n(x)\rho_n(y),\\
\Pi(x,y)&{}=\frac{A(x)B(y)-A(y)B(x)}{x-y}\rho(x)\rho(y),
\end{align*}
where $A_n, B_n, A, B$ are holomorphic in a fixed neighbourhood $V$ of $U$, $\rho_n,\rho$ are admissible weights. Let $p_1,\dots, p_l$ be all the points, where the weight $\rho$ either goes to infinity of has jump discontinuity. Let us say that the sequence of kernels $\Pi_n$ $C$-converges to the kernel $\Pi$ as $n\to\infty$ if
\begin{enumerate}
	\item $(A_n)$ and $(B_n)$ converge locally uniformly on compact subsets of $V$,
	\item $(\rho_n)$ converge uniformly on compact subsets of $U\setminus\{p_1,\dots,p_l\}$, and there exists an open subset $W$, containing $p_1,\dots,p_l$ and satisfying
	\begin{equation*}
		\int_W \sup_n \rho_n\,d\mu<+\infty
	\end{equation*}
\end{enumerate}

\begin{remark*}
  In concrete examples, the functions $A_n$ and $B_n$ are usually taken to be of the form
	\begin{equation*}
		A_n(x)=c_n^{(1)}\mathcal{P}_n(x),\quad
		B_n(x)=c_n^{(2)}(\mathcal{P}_n(x)-c_n^{(3)}\mathcal{P}_{n-1}(x)),
	\end{equation*}
	where $\mathcal{P}_n$ are orthogonal polynomials and $c_n^{(1)}$, $c_n^{(2)}$, $c_n^{(3)}$ are constants.
\end{remark*}

The following proposition establishes sufficient conditions for a kernel $\Pi$ to be a projection kernel.

\begin{proposition}
  Let integrable projection kernels
	\begin{equation*}
		\Pi_n(x,y)=\rho_n(x)\rho_n(y)\tilde{\Pi}_n(x,y)=\rho_n(x)\rho_n(y)\frac{A_n(x)B_n(y)-A_n(y)B_n(x)}{x-y}
	\end{equation*}
	$C$-converge to the limit integrable kernel
	\begin{equation}\label{eq:Pi-tildePi}
		\Pi(x,y)=\rho(x)\rho(y)\tilde{\Pi}(x,y)=\rho(x)\rho(y)\frac{A(x)B(y)-A(y)B(x)}{x-y}.
	\end{equation}
	If for any $p\in U$
	\begin{equation}\label{eq:cond-sup}
		\int_U \sup_n |\Pi_n(p,y)|^2\,dy<+\infty,
	\end{equation}
	then $\Pi$ is a projection kernel.
\end{proposition}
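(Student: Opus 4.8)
The plan is to verify directly that the limiting operator $\Pi$ is self-adjoint and idempotent, which for a bounded operator is equivalent to being an orthogonal projection; the nontrivial point is the idempotency, and the domination hypothesis~\eqref{eq:cond-sup} is tailored precisely to license the passage to the limit in the reproducing identity. First I would record the pointwise convergence $\Pi_n(x,y)\to\Pi(x,y)$. Away from the diagonal and from the finitely many singular points $p_1,\dots,p_l$ of $\rho$, this is immediate from the definition of $C$-convergence: $A_n\to A$, $B_n\to B$ locally uniformly and $\rho_n\to\rho$ pointwise on $U\setminus\{p_1,\dots,p_l\}$, so every factor in $\Pi_n(x,y)=\frac{A_n(x)B_n(y)-A_n(y)B_n(x)}{x-y}\rho_n(x)\rho_n(y)$ converges. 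Since the exceptional set (the diagonal together with the slices $z=p_i$) has Lebesgue measure zero in the integration variable, a.e.\ convergence is all that is needed below.

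Next, since each $\Pi_n$ is the kernel of an orthogonal projection on $L_2(U)$ with Hermitian (here real symmetric) kernel, the reproducing identity
\[
\int_U \Pi_n(x,z)\Pi_n(z,y)\,dz=\Pi_n(x,y)
\]
holds for all $n$. I would pass to the limit in the left-hand side by dominated convergence. For fixed $x,y$ the integrand converges a.e.\ to $\Pi(x,z)\Pi(z,y)$, and it is dominated, uniformly in $n$, by $g(z)=\bigl(\sup_n|\Pi_n(x,z)|\bigr)\bigl(\sup_n|\Pi_n(z,y)|\bigr)$. By the Cauchy--Schwarz inequality together with~\eqref{eq:cond-sup} applied with $p=x$ and with $p=y$,
\[
\int_U g(z)\,dz\le\Bigl(\int_U\sup_n|\Pi_n(x,z)|^2\,dz\Bigr)^{1/2}\Bigl(\int_U\sup_n|\Pi_n(z,y)|^2\,dz\Bigr)^{1/2}<+\infty,
\]
so $g\in L_1(dz)$ and dominated convergence applies. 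The right-hand side converges to $\Pi(x,y)$ by the pointwise convergence above, and I obtain the kernel identity $\int_U\Pi(x,z)\Pi(z,y)\,dz=\Pi(x,y)$ for a.e.\ $(x,y)$, that is, $\Pi^2=\Pi$ as operators (Fubini being justified on compact supports by the same majorant).

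Finally, $\Pi$ is real and symmetric, so the associated operator is self-adjoint, and by~\eqref{eq:cond-sup} one has $\int_U|\Pi(x,z)|^2\,dz\le\int_U\sup_n|\Pi_n(x,z)|^2\,dz<+\infty$, so $\Pi f$ is well defined for $f\in C_c(U)$. To see that $\Pi$ is bounded I would pass the contraction bound to the limit: each $\Pi_n$ is an orthogonal projection, whence $0\le\langle\Pi_n f,f\rangle\le\|f\|^2$, and $\langle\Pi_n f,f\rangle\to\langle\Pi f,f\rangle$ for $f\in C_c(U)$ by the same dominated-convergence argument, giving $0\le\langle\Pi f,f\rangle\le\|f\|^2$ on a dense domain; a symmetric operator with bounded quadratic form is bounded. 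Being a bounded self-adjoint idempotent, $\Pi$ is an orthogonal projection, as claimed. The main obstacle is exactly this interchange of limit and integral: without a uniform-in-$n$ integrable majorant the reproducing identity need not survive the limit, as the mass of $\Pi_n(x,\cdot)$ could escape to the singular points $p_i$ or to infinity, and it is to forbid such escape that the $L_2$-domination~\eqref{eq:cond-sup} and the integrability of $\sup_n\rho_n$ near the $p_i$ in the definition of $C$-convergence are imposed.
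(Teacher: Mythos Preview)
Your argument is correct and follows essentially the same route as the paper's proof: pass the reproducing identity $\int_U\Pi_n(x,z)\Pi_n(z,y)\,dz=\Pi_n(x,y)$ to the limit by dominated convergence, the majorant being supplied by~\eqref{eq:cond-sup} via Cauchy--Schwarz. The paper states this in two lines; you have simply spelled out the details (pointwise convergence off a null set, the boundedness of the limiting operator) that the paper leaves implicit.
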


\begin{proof}
  	Indeed, the equalities
	\begin{equation*}
		\Pi(p,p)=\int_U |\Pi(x,p)|^2\,dx\quad \text{and}\quad \Pi(x,y)=\int_U \Pi(x,u)\Pi(u,y)\,du
	\end{equation*}
	followed by the dominated convergence theorem.
\end{proof}

The condition~\eqref{eq:cond-sup} follows from the more explicit condition
	\begin{equation}\label{eq:cond-sup-AB}
	\int_U \sup_n \frac{A_n^2(x)+B_n^2(x)}{x^2+1}\rho_n^2(x)\,dx<+\infty
\end{equation}
We now check the condition~\eqref{eq:cond-sup-AB} for the confluent hypergeometric kernel.

\begin{corollary*}
  The confluent hypergeometric kernel is a projection kernel.
\end{corollary*}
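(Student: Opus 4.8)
The plan is to realize $K^{(s)}$ as a $C$-limit of genuine finite-rank projections and then appeal to the preceding proposition, so that the whole matter reduces to verifying the majorization hypothesis \eqref{eq:cond-sup}, for which it suffices to check the explicit condition \eqref{eq:cond-sup-AB}.

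For the approximating family I would take the rescaled Christoffel--Darboux kernels
$$\Pi_n(x,y)=\sgn(x)\sgn(y)\cdot n\cdot K_n^{(s)}(nx,ny),$$
where $K_n^{(s)}$ is the projection onto $\operatorname{span}\{1,\dots,x^{n-1}\}\sqrt{w_{n,s}}$ associated with the weight~\eqref{eq:wns}. Each $\Pi_n$ is again an orthogonal projection, since it is obtained from $K_n^{(s)}$ by the dilation $x\mapsto nx$ together with multiplication by $\sgn$, both unitary on $L_2(\mathbb{R})$ and leaving the determinantal measure unchanged. Borodin and Olshanski's convergence $\tilde A_n\to\tilde A$, $\tilde B_n\to\tilde B$, $\tilde\rho_n\to\tilde\rho$, uniform on compact subsets, is exactly what is needed for the $C$-convergence $\Pi_n\to K^{(s)}$: condition~(1) in the definition is the locally uniform convergence of $\tilde A_n,\tilde B_n$, while condition~(2) is the uniform convergence of $\tilde\rho_n$ away from the single singular point $0$ of the limit weight $\tilde\rho(x)=|2/x|^{\re s}e^{\pi\im s\,\sgn(x)/2}$, together with a local $L_1$-domination of the $\tilde\rho_n$ near $0$.

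It then remains to verify \eqref{eq:cond-sup-AB}, namely that $\sup_n\frac{\tilde A_n^2+\tilde B_n^2}{x^2+1}\,\tilde\rho_n^2$ is integrable. I would split the integral into a fixed compact region and the two ends of the phase space. On compacta the locally uniform convergence of $\tilde A_n,\tilde B_n,\tilde\rho_n$ gives at once a finite uniform bound, so the real work is at the two ends, where the confluent hypergeometric asymptotics enter. There the exponentials in $\tilde A_n,\tilde B_n$ have modulus one, the hypergeometric factors ${}_1F_1[\cdot;\cdot;2i/x]$ contribute only controlled power growth, and $\tilde\rho_n^2$ carries the weight $|2/x|^{2\re s}$; combined with the decay supplied by the factor $(x^2+1)^{-1}$, the integrand is dominated by an explicit power of $|x|$ that is integrable precisely in the regime $\re s>-1/2$.

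The hard part will be to make these estimates \emph{uniform in} $n$, since \eqref{eq:cond-sup-AB} controls $\sup_n$ and not merely the limit. Locally uniform convergence of $\tilde A_n,\tilde B_n,\tilde\rho_n$ says nothing about the two ends, and a neighbourhood of the singularity is exactly where the bound for finite $n$, where the data are still regular, must be reconciled with the blow-up that develops only in the limit. I expect the required uniform tail and near-singularity bounds to follow from the explicit integrable representation of the $K_n^{(s)}$ used by Borodin and Olshanski, combined with standard uniform estimates for confluent hypergeometric functions of large purely imaginary argument, producing a single integrable majorant for the whole family $\{\Pi_n\}$. Once \eqref{eq:cond-sup-AB} is secured, \eqref{eq:cond-sup} follows, the preceding proposition applies, and $K^{(s)}$ is a projection kernel.
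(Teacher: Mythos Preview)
Your approach is essentially the paper's: realize $K^{(s)}$ as the $C$-limit of the rescaled Borodin--Olshanski Christoffel--Darboux projections and then invoke the preceding proposition via \eqref{eq:cond-sup-AB}. The one concrete ingredient the paper supplies for what you correctly flag as the hard part---the uniformity in $n$---is the Euler integral representations of ${}_2F_1$ and ${}_1F_1$: for purely imaginary argument the factor $(1-zt/n)^{-n}$ in the ${}_2F_1$ integral has modulus at most~$1$, which yields bounds on $\tilde A_n,\tilde B_n$ uniform in $n$ directly rather than through large-argument asymptotics, and the paper then writes down the resulting single explicit majorant for \eqref{eq:cond-sup-AB}.
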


\begin{proof}
  Borodin and Olshanski \cite{BO} obtained the confluent hypergeometric kernel by a limit transition from the Christoffel---Darboux kernels for the pseudo-Jacobi polynomials.
	
  The pseudo-Jacobi polynomials are expressed via hypergeometric functions (see~formulae (1.18) and (1.19) in \cite{BO}), and the convergence of the functions $A_n$, $B_n$ follows from the convergence
	\begin{equation}\label{eq:hypergeom-conv}
		\lim_{|n|\to\infty} {}_2F_1\biggl[\genfrac{}{}{0pt}{}{n\;b}{c}\biggm|\frac{z_n}{n}\biggr]=
		{}_1F_1\biggl[\genfrac{}{}{0pt}{}{b}{c}\biggm|z\biggr],
	\end{equation}
	where $b,c$ depend only on $s$, and $|nz_n-z|=O(z)$ uniformly in $z$ on any circle with center at the origin. The convergence~\eqref{eq:hypergeom-conv} is uniform on compact sets, as one can easily see, following Borodin and Olshanski, from the integral representations
	\begin{align*}
		{}_1F_1\biggl[\genfrac{}{}{0pt}{}{b}{c}\biggm|z\biggr]&{}=
		\frac{1}{\Gamma(b)\Gamma(c-b)}\int_0^1 e^{zt}t^{b-1}(1-t)^{c-b-1}\,dt,\\
		{}_2F_1\biggl[\genfrac{}{}{0pt}{}{a\;b}{c}\biggm|z\biggr]&{}=
		\frac{1}{\Gamma(b)\Gamma(c-b)}\int_0^1 \frac{t^{b-1}(1-t)^{c-b-1}}{(1-zt)^a}\,dt.		
	\end{align*}
	The integral~\eqref{eq:cond-sup-AB} is hence bounded from above by the convergent integral
	\begin{equation*}
		\int_{|x|>1}\frac{1}{x^2+1}\frac{dx}{|x|^{2\re s}}+
		\int_{-1}^{1}|x|^{2\re s}\,dx.
	\end{equation*}
	The projection property is proved.
\end{proof}

The following property of the $C$-convergence is immediate from the definitions.

\begin{proposition}\label{prop:C-conv1}
  Let $\Pi_n$ be a sequence of kernels $C$-converging to a limit kernel $\Pi$ as $n\to\infty$. For any $p\in\mathbb{R}$ satisfying $\tilde\Pi(p,p)>0$, where $\tilde{\Pi}$ is defined by the formula~\eqref{eq:Pi-tildePi}, the Palm kernels $\Pi_n^p$ represented in the integrable form by~\eqref{eq:Pi-p-nonzero-case}, $C$-converge to the limit Palm kernel $\Pi^p$.
\end{proposition}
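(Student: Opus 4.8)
The plan is to reduce the claim to verifying the two defining conditions of $C$-convergence for the sequence $\Pi_n^p$, and to observe that the second condition is inherited for free. The crucial structural remark is that, at a point $p$ with $\tilde\Pi(p,p)>0$, forming the Palm kernel through \eqref{eq:ApBp-nonzero}--\eqref{eq:Pi-p-nonzero-case} does not touch the weight: the integrable representation of $\Pi_n^p$ has the same weight $\rho_n$ as $\Pi_n$, while the numerator functions are
\[
A_n^p(x)=A_n(x)+A_n(p)\frac{\tilde\Pi_n(p,x)}{\tilde\Pi_n(p,p)},\qquad B_n^p(x)=B_n(x)+B_n(p)\frac{\tilde\Pi_n(p,x)}{\tilde\Pi_n(p,p)},
\]
with $\tilde\Pi_n(x,y)=\frac{A_n(x)B_n(y)-A_n(y)B_n(x)}{x-y}$ as in \eqref{eq:Pi-tildePi}. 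Consequently the points $p_1,\dots,p_l$ where the limiting weight of $\Pi^p$ blows up or jumps are exactly those of $\Pi$, and condition~(2) of $C$-convergence for $\Pi_n^p\to\Pi^p$ is literally the hypothesis that $\rho_n$ $C$-converges to $\rho$. It therefore remains only to check condition~(1): that $A_n^p\to A^p$ and $B_n^p\to B^p$ locally uniformly on compact subsets of $V$.

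To establish condition~(1) I would show that each building block of $A_n^p$ converges. By hypothesis $A_n\to A$ and $B_n\to B$ locally uniformly on $V$, and evaluation at the single point $p$ gives $A_n(p)\to A(p)$, $B_n(p)\to B(p)$. The diagonal value $\tilde\Pi_n(p,p)=A_n'(p)B_n(p)-A_n(p)B_n'(p)$ converges to $\tilde\Pi(p,p)>0$, since locally uniform convergence of holomorphic functions carries over to their derivatives; as the limit is positive, $\tilde\Pi_n(p,p)$ stays bounded away from $0$ for large $n$ and division by it is harmless. The one factor requiring a little care is the off-diagonal term $\tilde\Pi_n(p,\cdot)$, whose naive formula has a removable singularity at $x=p$. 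Writing $g_n(x)=A_n(p)B_n(x)-A_n(x)B_n(p)$, which vanishes at $x=p$, I would use
\[
\tilde\Pi_n(p,x)=\frac{g_n(x)}{p-x}=-\int_0^1 g_n'\bigl(p+t(x-p)\bigr)\,dt,
\]
and note that $g_n'\to g'$ locally uniformly (again because $A_n(p),B_n(p)$ and $A_n',B_n'$ all converge), so the integral converges uniformly in $x$ on any compact subset of $V$. Hence $\tilde\Pi_n(p,\cdot)\to\tilde\Pi(p,\cdot)$ locally uniformly across the diagonal as well, and the limit extends holomorphically to $V$.

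Combining these facts, the functions $A_n^p$ and $B_n^p$, being sums and products of locally uniformly convergent holomorphic functions divided by a quantity bounded away from $0$, converge locally uniformly on compact subsets of $V$ to the holomorphic functions $A^p$, $B^p$. This yields condition~(1); together with the inherited condition~(2) it gives the $C$-convergence $\Pi_n^p\to\Pi^p$. The only genuine, though mild, obstacle is the uniform control of the divided difference $\tilde\Pi_n(p,\cdot)$ at its removable singularity, which is precisely where the holomorphy of $A_n,B_n$ (rather than mere smoothness) and the Weierstrass convergence theorem enter; everything else is a direct transcription of the definition of $C$-convergence, which is why the statement is immediate from the definitions.
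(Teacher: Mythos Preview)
Your proposal is correct and follows exactly the route the paper has in mind: the paper offers no argument beyond the sentence ``immediate from the definitions,'' and what you have written is precisely the direct verification that justifies that sentence. Your two observations---that passing to the Palm kernel via \eqref{eq:ApBp-nonzero}--\eqref{eq:Pi-p-nonzero-case} leaves the weight $\rho_n$ untouched (so condition~(2) is inherited verbatim), and that the only nontrivial point in condition~(1) is the uniform control of the divided difference $\tilde\Pi_n(p,\cdot)$ across its removable singularity, handled via holomorphy and Weierstrass---are exactly the content hidden behind ``immediate.'' One small remark: you were right to replace $\Pi$ by $\tilde\Pi$ in the formulae for $A_n^p,B_n^p$, since the definition of $C$-convergence requires the numerator functions to be holomorphic on $V$ and the weight to be factored out separately; the paper's display \eqref{eq:Pi-p-nonzero-case} is slightly loose on this point, but the intended reading is the one you used.
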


\begin{proposition}\label{prop:C-conv-Palm}
  Let $\mathbb{P}^{(n)}=\mathbb{P}_{\Pi_n}$ be a sequence of orthogonal polynomial ensembles of degree $n$ with weights $w_n(x)$. Assume that the kernels $\Pi_n$ $C$-converge to the limit projection kernel $\Pi$ as $n\to\infty$, and $\tilde{\Pi}(p,p)>0$ holds, where $\tilde{\Pi}$ is defined by~\eqref{eq:Pi-tildePi}. Then for any $p\in\mathbb{R}$ the orthogonal polynomial ensembles of degree $n-1$ with the weight $(x-p)^2w_n(x)$ converge to the Palm measure $\mathbb{P}_{\Pi^p}$ at the point~$p$.
\end{proposition}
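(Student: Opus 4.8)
The plan is to combine a finite-dimensional identification of the Palm measure of an orthogonal polynomial ensemble with the $C$-convergence of Palm kernels furnished by Proposition~\ref{prop:C-conv1}, and then to invoke the principle that $C$-convergence of projection kernels forces weak convergence of the associated determinantal measures. It is this last principle whose proof I expect to carry the technical weight.

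First I would record the finite-dimensional identity. Fix $n$ large enough that $\Pi_n(p,p)>0$; this holds for all large $n$, since $C$-convergence gives $\tilde\Pi_n(p,p)\to\tilde\Pi(p,p)>0$ and $\rho_n\to\rho$ near $p$. The ensemble $\mathbb{P}_{\Pi_n}$ is the projection onto $H_n=\operatorname{span}\{1,\dots,x^{n-1}\}\sqrt{w_n}$, so by the Shirai--Takahashi theorem its Palm measure at $p$ is the projection onto $H_n^p=\{f\in H_n:f(p)=0\}$. As $w_n(p)>0$, the vanishing condition falls on the polynomial factor, giving $H_n^p=(x-p)\operatorname{span}\{1,\dots,x^{n-2}\}\sqrt{w_n}$. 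On the other hand the degree-$(n-1)$ ensemble with weight $(x-p)^2w_n$ is the projection onto $\operatorname{span}\{1,\dots,x^{n-2}\}\,|x-p|\sqrt{w_n}$. The two subspaces differ by the unimodular factor $\sgn(x-p)$, so --- exactly as in the circular computation leading to~\eqref{eq:circ-orth-palm} --- they induce one and the same determinantal measure. Hence, for all large $n$, the degree-$(n-1)$ ensemble with weight $(x-p)^2w_n$ equals $\mathbb{P}_{\Pi_n^p}$.

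Next, since $\tilde\Pi(p,p)>0$, Proposition~\ref{prop:C-conv1} shows that the Palm kernels $\Pi_n^p$, in the integrable form~\eqref{eq:Pi-p-nonzero-case}, $C$-converge to the limit Palm kernel $\Pi^p$; as $\Pi$ is a projection kernel, $\Pi^p$ is precisely the Palm kernel of $\mathbb{P}_\Pi$ at $p$ by Shirai--Takahashi. Granting the weak-convergence principle, applied to $\Pi_n^p\to\Pi^p$ it yields $\mathbb{P}_{\Pi_n^p}\to\mathbb{P}_{\Pi^p}$, and together with the preceding paragraph this identifies the limit of the degree-$(n-1)$ ensembles with the Palm measure $\mathbb{P}_{\Pi^p}$, as required.

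It remains to establish that $C$-convergence of projection kernels implies weak convergence of the determinantal measures, which is the main obstacle. By Proposition~\ref{prop:fin-distr-unique} and the Daley--Vere-Jones description of the weak topology, it suffices to prove convergence of the joint distributions of the $\#_{\varphi_i}$, that is, of the Fredholm determinants $\det\bigl(1+\sum_j(z_j-1)\chi_{B_j}\Pi_n^p\chi_{\sqcup_i B_i}\bigr)$ over relatively compact $B_j$; this in turn follows from trace-norm convergence of $\chi_B\Pi_n^p\chi_B$ for each relatively compact $B$. Since each $\Pi_n^p$ is an orthogonal projection I would factor $\chi_B\Pi_n^p\chi_B=(\chi_B\Pi_n^p)(\Pi_n^p\chi_B)$ into a product of Hilbert--Schmidt operators, reducing the task to the Hilbert--Schmidt convergence $\int_B\!\int_U|\Pi_n^p(x,y)-\Pi^p(x,y)|^2\,dy\,dx\to0$. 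Away from the finitely many singular points $p_1,\dots,p_l$ of $\rho$, the locally uniform convergence of $A_n,B_n$ and the uniform convergence of $\rho_n$ give pointwise convergence of the kernels; near those points and in the tails, the domination built into $C$-convergence, namely $\int_W\sup_n\rho_n\,d\mu<+\infty$ together with an $L^2$ bound of the type~\eqref{eq:cond-sup-AB}, supplies an integrable majorant, whence dominated convergence delivers the required $L^2$ convergence. The delicate point is precisely to marshal this domination uniformly across the singularities of the weight and in the tails --- which is exactly what the second clause of the definition of $C$-convergence is designed to provide.
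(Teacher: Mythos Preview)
Your argument is correct and follows essentially the same route as the paper: identify the degree-$(n-1)$ ensemble with weight $(x-p)^2w_n$ with the Palm measure $\mathbb{P}_{\Pi_n^p}$ via the unimodular factor $\sgn(x-p)$ between the two subspaces, then invoke Proposition~\ref{prop:C-conv1} to get $C$-convergence $\Pi_n^p\to\Pi^p$. The paper's proof stops there, taking the passage from $C$-convergence of kernels to weak convergence of determinantal measures as understood; your third paragraph supplies a sketch of that passage (projection factorisation into Hilbert--Schmidt pieces, then dominated convergence using the integrability built into the definition of $C$-convergence), which is additional detail rather than a different approach.
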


\begin{proof}
  Indeed, the Palm kernels $\Pi_n^p$ $C$-converge to the kernel $\Pi^p$. The Palm kernel corresponds to the subspace 
	\begin{equation*}
		\operatorname{span}\{1,x,\dots,x^{n-2}\}(x-p)\sqrt{w_n(x)}
	\end{equation*}
	of functions from $\operatorname{Ran}\Pi_n$ vanishing in the point $p$.
	The orthogonal polynomial ensemble of degree $n-1$ with the weight $(x-\nobreak p)^2w_n(x)$ corresponds to the subspace
	\begin{equation*}
		\operatorname{span}\{1,x,\dots,x^{n-2}\}|x-p|\sqrt{w_n(x)}. 
	\end{equation*}
	These two subspaces differ by multiplication by a function with absolute value of 1, so the respective determinantal measures coincide. The proof is complete.
\end{proof}

Now recall that the kernel $K_n^{(s)}$ is the $n$-th Christoffel---Darboux kernel of the orthogonal polynomial ensemble with the weight $w_{n,s}$, given by the formula~\eqref{eq:wns}. Proposition~\ref{prop:tildeW} implies that $\mathbb{P}_{K^{(s+1)}_{n-1}}$ is the Palm measure at $\infty$ of the measure $\mathbb{P}_{K^{(s)}_{n}}$. We apply the change of variable $x\mapsto 1/x$:
\begin{equation*}
	\Pi_n^{(s)}(x,y)=K_n^{(s)}\biggl(\frac{1}{x},\frac{1}{y}\biggr) \cdot \frac{1}{xy}.
\end{equation*}
It is clear that $\mathbb{P}_{\Pi^{(s+1)}_{n-1}}$ is the Palm measure at $0$ of the measure $\mathbb{P}_{\Pi^{(s)}_{n}}$. The sequence $(1/n)\Pi^{(s)}_n(x/n,y/n)$ $C$-converges as $n\to\infty$ to the kernel $\Pi^{(s)}(x,y)$. Therefore, $\mathbb{P}_{\Pi^{(s+1)}}$ is the Palm measure at $0$ of the measure $\mathbb{P}_{\Pi^{(s)}}$. The inverse change of variable yields that $\mathbb{P}_{K^{(s+1)}}$ is the Palm measure at $\infty$ of the measure $\mathbb{P}_{K^{(s)}}$. Theorem~\ref{thm:main} is proven completely.

Consider a closed subspace $H^{(p,n)}$ of $H$, containing all functions with an order of a zero in $p$ not less than $n$. For any $p\in U$ there exists $n_0$ such that for any $n>n_0$ we have
\begin{equation*}
	\dim (H^{(p,n)}\ominus H^{(p,n+1)})=1.
\end{equation*}
Denoting $L^{(p,n)}=H^{(p,n)}\ominus H^{(p,n+1)}$, we arrive at the hierarchical decomposition
\begin{equation*}
	H=L^{(p,0)}\oplus L^{(p,1)}\oplus\dots\oplus L^{(p,n)}\oplus\cdots.
\end{equation*}

\section{Concluding remarks: convergence of circular orthogonal polynomial ensembles}

Another proof of Theorem~\ref{thm:main} can be given using the scaling limit transition from the orthogonal polynomials on the circle to the orthogonal polynomials on the line under the linear scaling $\theta\to\theta/n$.

Assume that for any integer $n$ there is a weight~$w_n$ on the unit circle. Let $\Pi_n(z,\zeta)$ be a Christoffel---Darboux kernel of the orthogonal projection onto the subspace of the polynomials of degree at most $n$ with respect to the weight~$w_n$. The Christoffel---Darboux formula implies that the kernel $\Pi_n$ can be expressed by the formula
\begin{equation*}
\Pi_n(z,\zeta)=
\frac{\overline{\varphi^*_{n+1}(\zeta)}\cdot \varphi^*_{n+1}(z)-
\overline{\vphantom{\varphi^*}\varphi_{n+1}(\zeta)}\cdot \varphi_{n+1}(z)}{1-\overline{\zeta}z}\cdot\sqrt{w_n(z)w_n(\zeta)},
\end{equation*}
where $\varphi_n$ are the orthogonal polynomials and $\varphi^*_n$ is dual to $\varphi_n$: $\varphi_n^*(z)=z^n\overline{\varphi_n(1/\bar z)}$.

As before, the key r\^ole is played by the integrable form of the limit kernel~$\Pi$.

Let $\Pi_n$ be a sequence of kernels on the circle defined by the formula
\begin{equation*}
	\Pi_n(z,\zeta)=\tilde\Pi_n(z,\zeta)\rho_n(z)\rho_n(\zeta)=\frac{\overline{Q_n^*(\zeta)}Q_n^*(z)-\overline{Q_n(\zeta)}Q_n(z)}{1-\overline{\zeta}z}\cdot\rho_n(z)\rho_n(\zeta),
\end{equation*}
where $Q_n$ is a polynomial and $\rho_n$ is a positive function. Assume the following limit relations:
\begin{equation*}
	\lim_{n\to\infty} Q_n(e^{it/n})=Q(t),\quad
	\lim_{n\to\infty} n\rho_n(e^{it/n})=\rho(t).	
\end{equation*}
Of course, in this case we must also have
\begin{equation*}
	\lim_{n\to\infty} Q^*_n(e^{it/n})=Q^*(t).	
\end{equation*}
Under these conditions we say that the sequence of kernels $\Pi_n$ on the circle $T$-converges to the limit kernel $\Pi$ on the line. The following statement is clear from the definitions.
\begin{proposition}\label{prop:T-conv}
  If a sequence of kernels $\Pi_n$ $T$-converges to $\Pi$ as $n\to\infty$ and $\tilde\Pi(1,1)>0$, then the sequence of the Palm kernels $\Pi_n^{(1)}$ $T$-converges to $\Pi^{(1)}$ as $n\to\infty$.
\end{proposition}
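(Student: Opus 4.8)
The plan is to mirror the proof of Proposition~\ref{prop:C-conv1}: on the level of integrable kernels the taking of the reduced Palm measure is a rank-one correction that leaves the weight $\rho_n$ untouched and acts continuously on the remaining integrable data, so it suffices to carry that data through the scaling $z=e^{it/n}$.

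First I would record the integrable form of the circular Palm kernel. The hypotheses of $T$-convergence give, locally uniformly, convergence of the rescaled reduced kernels $\tfrac1n\tilde\Pi_n(e^{is/n},e^{it/n})$ to the reduced limit kernel $\tilde\Pi(s,t)$ on the line, the diagonal value being read off by l'H\^opital from the vanishing of $1-\overline\zeta z$. In particular the assumed positivity $\tilde\Pi(1,1)>0$ at the distinguished Palm point forces $\tilde\Pi_n(1,1)>0$ for all large $n$, so the reduced Palm kernel
\[
\tilde\Pi_n^{(1)}(z,\zeta)=\tilde\Pi_n(z,\zeta)-\frac{\tilde\Pi_n(z,1)\,\tilde\Pi_n(1,\zeta)}{\tilde\Pi_n(1,1)}
\]
is well defined, carries the \emph{same} weight $\rho_n$, and, by the formulae \eqref{eq:ApBp-nonzero}--\eqref{eq:Pi-p-nonzero-case} adapted to the circular denominator $1-\overline\zeta z$, is again integrable, with numerator data
\[
Q_n^{*(1)}(z)=Q_n^{*}(z)+Q_n^{*}(1)\frac{\tilde\Pi_n(1,z)}{\tilde\Pi_n(1,1)},\qquad
Q_n^{(1)}(z)=Q_n(z)+Q_n(1)\frac{\tilde\Pi_n(1,z)}{\tilde\Pi_n(1,1)}.
\]

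It then remains to check the two defining relations of $T$-convergence for $\Pi_n^{(1)}$. The weight relation $n\rho_n(e^{it/n})\to\rho(t)$ is inherited verbatim, since the Palm correction does not touch $\rho_n$. For the numerator data I would exploit that both $\tilde\Pi_n(1,e^{it/n})$ and $\tilde\Pi_n(1,1)$ grow like $n$, so that the factors of $n$ cancel in their quotient,
\[
\frac{\tilde\Pi_n(1,e^{it/n})}{\tilde\Pi_n(1,1)}
=\frac{\tfrac1n\tilde\Pi_n(1,e^{it/n})}{\tfrac1n\tilde\Pi_n(1,1)}
\longrightarrow\frac{\tilde\Pi(1,t)}{\tilde\Pi(1,1)},
\]
the denominator limit being nonzero by hypothesis. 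Together with the convergences $Q_n^{*}(e^{it/n})\to Q^{*}(t)$, $Q_n(e^{it/n})\to Q(t)$ and of the boundary constants $Q_n^{*}(1)$, $Q_n(1)$, this yields $Q_n^{*(1)}(e^{it/n})\to Q^{*(1)}(t)$ and $Q_n^{(1)}(e^{it/n})\to Q^{(1)}(t)$, where $Q^{*(1)},Q^{(1)}$ are precisely the numerator data of the Palm kernel $\Pi^{(1)}$ of the limit. This is the assertion of the proposition.

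The one step deserving more than a glance is the convergence $\tfrac1n\tilde\Pi_n(1,1)\to\tilde\Pi(1,1)$ of the diagonal value. Since this value arises from l'H\^opital at the vanishing denominator $1-\overline\zeta z$, it is not formally implied by the pointwise limits $Q_n(e^{it/n})\to Q(t)$ alone: one needs the rescaled difference quotients of $Q_n$ and $Q_n^{*}$ at $z=1$ to converge, a mild uniformity of the $T$-convergence across the diagonal. This is harmless in the cases of interest, where the $Q_n$ are orthogonal polynomials with explicit hypergeometric asymptotics; granting it, the positivity $\tilde\Pi(1,1)>0$ keeps the denominators bounded away from zero and the argument closes exactly as in Proposition~\ref{prop:C-conv1}.
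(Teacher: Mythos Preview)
Your proposal is correct and follows exactly the line the paper has in mind: the paper gives no proof at all beyond ``clear from the definitions,'' and what you have written is precisely the unpacking of those definitions via the rank-one Palm correction \eqref{eq:ApBp-nonzero}--\eqref{eq:Pi-p-nonzero-case}, just as in the $C$-convergence case of Proposition~\ref{prop:C-conv1}. The only remark worth adding is that the subtlety you flag about the diagonal value $\tfrac1n\tilde\Pi_n(1,1)\to\tilde\Pi(0,0)$ is in fact not an extra hypothesis in this setting: the $Q_n$ are polynomials (hence entire), and the convergence $Q_n(e^{it/n})\to Q(t)$ is tacitly locally uniform in the complex variable, so the derivative convergence needed for the l'H\^opital diagonal is automatic by the standard Weierstrass argument.
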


A counterpart of Proposition~\ref{prop:C-conv-Palm} also holds for $T$-convergence of projection kernels.

\begin{proposition}
  Consider a sequence of orthogonal polynomial ensembles $\mathbb{P}_{\Pi_n}$ of degree $n$ with weights $w_n$ on the circle. Assume that the kernels $\Pi_n$ $T$-converge to the limit projection kernel $\Pi$ as $n\to\infty$. Assume furthermore that $\tilde\Pi(1,1)>0$. Then the orthogonal polynomial ensembles of degree $n-1$ with the weight $|1-\nobreak e^{it}|^2w_n(e^{it})$ converge to the measure $\mathbb{P}_{\Pi^0}$, that is, the Palm measure at zero of the determinantal measure $\mathbb{P}_\Pi$.
\end{proposition}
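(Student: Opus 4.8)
The plan is to transcribe the proof of Proposition~\ref{prop:C-conv-Palm} into the circular setting, the only changes being that $C$-convergence is replaced by $T$-convergence and that the distinguished point is the point $1$ on the circle, which the scaling $z=e^{it/n}$ carries to the origin of the line. First I would invoke Proposition~\ref{prop:T-conv}: since $\Pi_n$ $T$-converges to $\Pi$ and $\tilde\Pi(1,1)>0$, the Palm kernels $\Pi_n^{(1)}$ at the circle point $1$ $T$-converge to a limit kernel, which is the Palm kernel $\Pi^0$ of the line limit $\Pi$ at $0$. This is the precise sense in which taking the reduced Palm measure commutes with the scaling transition.

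The heart of the argument is then the same subspace bookkeeping as before. The range of $\Pi_n$ is $\operatorname{span}\{1,z,\dots,z^{n}\}\sqrt{w_n(z)}$, and $\Pi_n^{(1)}$ projects onto the subspace of those of its elements that vanish at $z=1$; vanishing at $1$ forces divisibility by $z-1$, so this subspace equals $\operatorname{span}\{1,z,\dots,z^{n-1}\}(z-1)\sqrt{w_n(z)}$. On the other hand, the degree-$(n-1)$ orthogonal polynomial ensemble with weight $|1-e^{it}|^2w_n(e^{it})$ is the projection onto $\operatorname{span}\{1,z,\dots,z^{n-1}\}|1-z|\sqrt{w_n(z)}$. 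These two $n$-dimensional subspaces differ only by the unimodular factor $(z-1)/|z-1|$, and multiplication by a function of absolute value $1$ leaves the determinantal measure unchanged; hence $\mathbb{P}_{\Pi_n^{(1)}}$ is exactly the degree-$(n-1)$ ensemble with weight $|1-e^{it}|^2w_n$.

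To conclude I would combine the two observations: the degree-$(n-1)$ ensembles coincide with $\mathbb{P}_{\Pi_n^{(1)}}$, whose kernels $T$-converge to $\Pi^0$; since $T$-convergence of projection kernels forces weak convergence of the associated determinantal measures, through the convergence of finite-dimensional distributions as in Proposition~\ref{prop:fin-distr-unique}, these ensembles converge weakly to $\mathbb{P}_{\Pi^0}$, the Palm measure at $0$ of $\mathbb{P}_\Pi$.

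The subspace bookkeeping is routine once the degree counts are fixed, so I expect the genuine obstacle to lie in the limiting step: one must be sure that the kernel supplied by Proposition~\ref{prop:T-conv} is literally the Palm kernel $\Pi^0$ of the line limit, and not merely a kernel of the same integrable shape, and that $T$-convergence really does carry weak convergence of the finite ensembles to the infinite limit. Establishing this amounts to checking that the scaling $z=e^{it/n}$ intertwines the circular Palm construction at $1$ with the line Palm construction at $0$ continuously in the point---precisely the continuity of Palm measures in admissible weights prepared earlier---which is why the substance of the matter is carried by Proposition~\ref{prop:T-conv} rather than by the present statement.
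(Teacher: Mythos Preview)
Your proposal is correct and follows essentially the same route as the paper: invoke Proposition~\ref{prop:T-conv} to get $T$-convergence of the Palm kernels, then identify the Palm ensemble with the modified-weight ensemble via the unimodular factor $(1-z)/|1-z|$, exactly as in the proof of Proposition~\ref{prop:C-conv-Palm}. The only discrepancy is a harmless off-by-one in your degree bookkeeping: in the paper's convention the degree-$n$ ensemble has range $\operatorname{span}\{1,\dots,e^{i(n-1)t}\}\sqrt{w_n}$, so the two subspaces you should be comparing are $\operatorname{span}\{1,\dots,e^{i(n-2)t}\}(1-e^{it})\sqrt{w_n}$ and $\operatorname{span}\{1,\dots,e^{i(n-2)t}\}|1-e^{it}|\sqrt{w_n}$.
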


The proof is similar to the one for $C$-convergence and relies on the invariance of the $T$-convergence under taking Palm measures, as well as on the equality of the determinantal measures corresponding to the subspaces
\begin{gather*}
	\operatorname{span}\{1,e^{it},\dots,e^{i(n-2)t}\}(1-e^{it})\sqrt{w(t)}\\
	\intertext{and}
	\operatorname{span}\{1,e^{it},\dots,e^{i(n-2)t}\}|1-e^{it}|\sqrt{w(t)}.
\end{gather*}
Observe also that the $T$-convergence is related to the $C$-convergence by a linear fractional variable change.

The circle $\{z:|z|=1\}$ is mapped onto the real line by the change of variables $z=(x-i)/(x+i)$. The Christoffel---Darboux kernels $\Pi_n(z,\zeta)$ are then transformed to the kernels 
\begin{equation*}
	\tilde\Pi_n(x,y)=\frac{1}{\sqrt{x^2+1}\sqrt{y^2+1}}
	\Pi_n\biggl(\frac{x-i}{x+i},\frac{y-i}{y+i}\biggr).
\end{equation*}
(The factor of $1/\sqrt{x^2+1}\sqrt{y^2+1}$ corresponds to the Jacobian of the change of variables.)

Then one can find a function $\varphi_n(x)$, $|\varphi_n(x)|\equiv 1$, such that the $T$-convergence of the kernels $\Pi_n$ implies $C$-convergence of the kernels $\tilde\Pi_n(x,y)\varphi_n(x)\overline{\varphi_n(y)}$.

Theorem~\ref{thm:main} follows now from Proposition~\ref{prop:T-conv} and the representation due to Bourgade, Nikeghbali and Rouault~\cite{BNR} of the determinantal process with the confluent hypergeometric kernel $K^{(s)}$ (see the formula~\eqref{eq:conf-kernel}) as a limit of the Jacobi circular orthogonal polynomial ensembles.

Indeed, Bourgade, Nikeghbali and Rouault in~\cite{BNR} consider an $n$-th orthogonal polynomial ensemble on the circle, corresponding to the weight
\begin{equation*}
	w^{(s)}(\theta)=(1-e^{i\theta})^s(1-e^{-i\theta})^{\bar s},
\end{equation*}
and prove that under scaling $\theta=x/n$ the respective sequence of determinantal measures with finite number of particles converges as $n\to\infty$ to the determinantal process on the line with the confluent hypergeometric kernel $\Pi^{(s)}$ defined by the formula~\eqref{eq:Pi-s}. Proposition~\ref{prop:T-conv} now implies Theorem~\ref{thm:main}.\qed

\end{document}